\documentclass{article}
\usepackage{graphicx}
\usepackage[english]{babel}
\usepackage{amsmath}
\usepackage{amsthm}
\usepackage{amscd}
\usepackage{pb-diagram}
\usepackage{algorithm}
\usepackage{algpseudocode}
\usepackage{amsmath,amsthm,amssymb}

\usepackage{comment}
\usepackage{amssymb}
\usepackage{subcaption}
\usepackage{color}
\usepackage{enumitem}
\usepackage{faktor}
\usepackage[normalem]{ulem}  

\usepackage{amsmath, amssymb, amsthm}  
\usepackage{mathtools}  
\usepackage{bm}  
\usepackage{bbm}  
\usepackage{physics}  

\usepackage{tikz}
\usetikzlibrary{arrows, automata, positioning}

\usepackage{graphicx}  
\usepackage{float}  

\usepackage{geometry}  
\usepackage{times}  
\usepackage{setspace}  

\usepackage{hyperref}
\hypersetup{
    colorlinks=true,
    linkcolor=blue,
    citecolor=blue,
    urlcolor=blue
}

\theoremstyle{definition}
\newtheorem{definition}{Definition}
\theoremstyle{plain}
\newtheorem{theorem}{Theorem}
\newtheorem{example}{Example}
\newtheorem{proposition}{Proposition}
\newtheorem{lemma}{Lemma}
\newtheorem{corollary}{Corollary}

\theoremstyle{remark}
\newtheorem{remark}{Remark}

\usepackage{authblk}

\usepackage{algorithm}
\usepackage{algorithmicx}
\usepackage{algpseudocode}

\newcommand{\mo}{\operatorname{mo}}
\newcommand{\cl}{\operatorname{cl}}

\newcommand{\img}{\operatorname{im}}

\newcommand{\esol}{\operatorname{eSol}}

\def\mathobj#1{\mbox{$#1$}}

\def\ZZ{\mathobj{\mathbb{Z}}}

\newcommand{\dom}{\operatorname{dom}}
\newcommand{\Inv}{\operatorname{Inv}}

\def\cT{\text{$\mathcal T$}}

\def\cV{\text{$\mathcal V$}}

\def\articletheorems{

}
\articletheorems

\renewenvironment{proof}{{\bf Proof:\ }}{\qedsymbol}

\title{Graph Multivector Persistence: A Unified Framework for Dynamic Systems} 
 \author{Donald Woukeng}
\affil{African Institute For Mathematical Science (AIMS)\\

Kigali, Rwanda.\\ 
donald.woukeng@aims.ac.rw}
\date{}

\begin{document}

\maketitle
\begin{abstract}
We introduce a persistence-type invariant for finite weighted graphs
based on combinatorial multivector dynamics. 
For each threshold parameter, a relation matrix determines a graph
multivector field, whose induced directed dynamics admits a
Morse decomposition given by its strongly connected components.
As the threshold varies, these multivector fields form a monotone
refinement family. 
We define the Morse persistence diagram by recording the birth and
death of Morse sets along this filtration.

The construction is purely combinatorial and does not rely on
simplicial homology or persistence modules. 
We prove that the resulting persistence diagram is stable with respect
to perturbations of the relation matrix in the sup norm.
Each Morse set furthermore carries a combinatorial Conley index,
yielding a topologically enriched invariant for multiscale graph
structure.
\end{abstract}\section{Introduction}

Graphs are among the most universal mathematical objects. 
They encode relations, transitions, interactions, and dependencies,
and they arise naturally in stochastic systems, network dynamics,
logical inference, data science, and quantum information.
Yet, while graph-theoretic descriptors such as connectivity,
centrality, or spectral invariants are well developed,
they do not fully capture how \emph{coherent recurrent structures}
emerge, organize, and persist across scales.

The aim of this work is to introduce a unified combinatorial framework
for detecting and quantifying such persistent organization in graphs.
Our approach extends the theory of combinatorial multivector fields
introduced by Mrozek~\cite{mrozek2017conley}, and further developed in
\cite{Woukeng_2024,cote2025data}, to the setting of weighted graphs,
and integrates it with a persistence-type viewpoint inspired by
topological data analysis \cite{cohen2005stability,chazal2016structure}.

The central object of the theory is the \emph{graph multivector field}.  
Given a weighted relation matrix $W$ on a finite graph,
we construct, for each threshold parameter $\lambda$,
a partition of the vertex--edge set into multivectors.
Each multivector groups together vertices and edges that are strongly
linked according to the thresholded relation.
As the threshold varies, these partitions evolve monotonically,
producing a filtration of combinatorial structures on the graph.

From each level of this filtration, we derive an associated directed
graph, the $M$-graph whose strongly connected components define the
\emph{Morse decomposition} at that scale.
These Morse sets represent recurrent or internally coherent components
of the graph relative to the multivector structure.
Tracking how these Morse sets appear and merge as $\lambda$ varies
leads to a persistence-type invariant:
the \emph{Morse persistence diagram}.
Each point $(\lambda_b,\lambda_d)$ records the birth and death thresholds
of a Morse set, that is, the parameter range over which a recurrent
structure persists.

Although the resulting diagram resembles a classical persistence
diagram, our setting is intrinsically combinatorial.
We do not construct persistence modules in the sense of
\cite{zomorodian2004computing}, nor rely on functorial homology of nested
simplicial complexes as in \cite{EdelsbrunnerHarer2010CT}.
Instead, persistence arises from refinement relations between
multivector partitions and from monotonicity properties of Morse
decompositions.
The bottleneck stability of the diagram follows directly from these
refinement properties, in the spirit of stability results in
persistent homology \cite{cohen2005stability,chazal2016structure}.
Thus stability is a structural consequence of the combinatorial
construction itself.

The present work sits at the intersection of several established
theories.
On the dynamical side, Morse decompositions and Conley index theory
provide a topological framework for understanding invariant and
recurrent structures \cite{conley1978isolated,mischaikow2002conley}.
Discrete and combinatorial analogues have been developed through
multivector fields and computational Conley theory
\cite{mrozek2017conley,kaczynski2004computational}.
On the topological side, persistent homology has become a central tool
for extracting multiscale features from data
\cite{EdelsbrunnerHarer2010CT}.
Applications to network and graph data include persistent analyses of
complex networks and brain connectivity
\cite{horak2009persistent,petri2014homological}.
The framework developed here differs in that persistence arises from
combinatorial dynamics on graphs rather than from nested simplicial
complexes, thereby linking recurrence theory with multiscale graph
analysis.

The construction is intentionally flexible.
By choosing different interpretations of the relation matrix $W$,
the same mechanism applies across diverse domains:

\begin{itemize}
  \item In \emph{Markov chains}, $W$ derives from transition
  probabilities, and Morse sets may correspond to recurrent or metastable
  classes, closely related to Perron cluster and metastability
  analysis \cite{schutte2003biomolecular}.
  
  \item In \emph{temporal or interaction networks}, $W$ measures
  aggregated interaction strength, and persistence quantifies the
  robustness of connectivity patterns.
  
  \item In \emph{logical or causal systems}, $W$ represents implication
  or correlation strengths, revealing stable feedback or inference
  cycles.
  
  \item In \emph{quantum systems}, $W$ may encode correlation or
  entanglement strength between subsystems, identifying groups that
  remain coherently coupled.
\end{itemize}

Moreover, the theory admits a topological enrichment.
Each Morse set carries a combinatorial Conley index,
whose Betti numbers describe its internal structure.
This yields a Conley--enriched Morse persistence diagram,
providing not only lifespan information but also topological signatures
of recurrent components.

Beyond theoretical analysis, the persistence diagram can be used as a
feature representation for data-driven tasks.
Vectorizations such as the algebraic coordinates introduced by
Adcock, Carlsson, and Carlsson \cite{adcock2013ring}
transform persistence diagrams into finite-dimensional feature vectors,
while alternative summaries such as persistence landscapes
\cite{bubenik2015statistical} provide stable functional embeddings.
In this way, the framework connects combinatorial dynamics,
topological structure, and statistical learning.

The remainder of the paper develops the theory systematically.
We first introduce graph multivector fields and establish their
monotonicity and refinement properties.
We then construct the associated Morse decompositions and define the
Morse persistence diagram.
Stability with respect to perturbations of the relation matrix is
proved using purely combinatorial arguments.
Finally, we discuss topological enrichment via Conley indices and
outline applications to dynamical systems, networks, and data analysis.
\section{Graph Multivector Fields}

Multivector fields were originally introduced by Mrozek \cite{lipinski2019conley} in the context of
combinatorial dynamics as a tool to capture invariant sets and Morse
decompositions of dynamical systems \cite{Woukeng_2024}. The key idea is to replace the
continuous notion of a vector field by a partition of the underlying
combinatorial space into so--called multivectors, and then to analyze
dynamics through this discrete structure, that is suppose to mimic the behavior of the continuous system.

In this work we take this idea and specialize it to graphs. Our basic
objects are vertices and edges, and we want to study how they can be
grouped together according to rules that come from an underlying matrix
of relations. This matrix may encode different types of information
depending on the context: adjacency in a static network, transition
probabilities of a Markov chain, similarity scores in data, temporal
adjacency in a dynamic network, or even correlation measures coming from
quantum systems. What matters is that the matrix gives us a way to decide
when a vertex and an edge ``belong together''.

Formally, a \emph{graph multivector} is a finite set of vertices and
edges of a graph that we merge into a single unit. A collection of such
multivectors is called a \emph{graph multivector field} if it partitions
the graph: every vertex and every edge belongs to exactly one
multivector. In this way, the multivector field is a coarse version of
the graph itself, where the basic elements are not single vertices or
edges but larger grouped structures.

The interest of this construction is that once we have such a
partition, we can define a higher--level graph (the $M$--graph which is the graph generated by the multivector field itself looking at the multi valued map we will define later on) whose
nodes are the multivectors and whose edges describe possible transitions
between them. The strongly connected components of this $M$--graph then
form the Morse decomposition of the original system. By varying the
parameters that control the merging process (for example, thresholds
applied to the entries of the relation matrix), we obtain a nested family
of multivector fields. This naturally leads to a persistent version of
Morse decomposition, and hence to persistence diagrams that summarize the
robust recurrent structures present in the system.
\subsection{Generality on multivector fields} 
Classical approaches to dynamics on graphs typically assign states and evolution rules solely to vertices, treating edges as passive carriers of adjacency or transition information. Such vertex-based models are sufficient for path-following dynamics, but they are inadequate for describing coherent local behavior that is intrinsically supported by interactions between vertices. In particular, recurrent or trapping phenomena on graphs often involve both vertices and edges and cannot be localized to single nodes without loss of structural information. To encode such dynamics in a way that remains intrinsic to the graph structure, we adopt a combinatorial approach in which vertices and edges jointly form the underlying state space. Multivector fields are then introduced as locally closed, connected subsets of this space, providing a natural generalization of discrete vector fields that allows for multivalued and locally coherent dynamics while remaining compatible with Conley--Morse theory on finite topological spaces.

\subsubsection{Basic notions}

We begin by recalling the basic language used in the theory of
combinatorial multivector fields and adapting it to the context of graphs.
Throughout this section all sets are finite.

\begin{definition}[Finite poset]
A \emph{finite poset} $(X,\leq)$ is a finite set $X$ equipped with a
reflexive, antisymmetric and transitive relation $\leq$.  Given
$x,y\in X$, we write $x<y$ if $x\leq y$ and $x\neq y$.
For every subset $A\subseteq X$ we define its \emph{closure}
\[
\operatorname{cl}(A) = \{\,x\in X : \exists\,a\in A,\ x\leq a\,\}
\]
and its \emph{mouth}
\[
\operatorname{mo}(A) = \operatorname{cl}(A)\setminus A.
\]
\end{definition}

Every finite poset $(X,\leq)$ defines a topological space, known as the
\emph{Alexandroff} or \emph{$T_0$ topology} (see \cite{Alexandroff_ftop}), in which the closed sets are
precisely the down-sets, that is, subsets $U\subseteq X$ satisfying
$x\in U$ and $y\leq x$ imply $y\in U$.  The open sets are the up-sets,
and the closure operator defined above coincides with the topological
closure in this topology.  In this way, posets and finite $T_0$
topological spaces can be used interchangeably.
\begin{theorem}\label{thm:finite_top} \cite{Alexandroff_ftop}
For a finite poset $(P,\leq)$ the family $\cT_{\leq}$ of upper sets of $\leq$ is a $T_0$ topology on $P$. For a finite $T_0$ topological space $(X,\cT)$ the relation $x \leq_{\cT} y$ defined by $x \in \cl_{\cT} \{ y \}$ is a partial order on $X$. Moreover, the two associations that relate $T_0$ topologies and partial orders are mutually inverse.
    
\end{theorem}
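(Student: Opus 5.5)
The plan has three parts: show that the upper sets of a finite poset form a $T_0$ topology, show that the specialization relation of a finite $T_0$ space is a partial order, and check that the two constructions undo each other. Everything is finite, so arbitrary unions and intersections reduce to finite ones and no limiting arguments are needed.

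\textbf{Upper sets form a $T_0$ topology.} Let $\cT_{\leq}$ be the family of upper sets, that is, sets $U$ with $x\in U$ and $x\leq y$ implying $y\in U$. Both $\emptyset$ and $P$ are upper sets. Any union or intersection of upper sets is again an upper set, since the defining condition is checked pointwise. So $\cT_{\leq}$ is a topology; in fact it is closed under arbitrary intersections as well.

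For the $T_0$ property, I use the principal up-set $\uparrow x=\{y : x\leq y\}$, which is open. It is the smallest open set containing $x$. Take $x\neq y$. By antisymmetry, either $x\not\leq y$ or $y\not\leq x$. In the first case $\uparrow x$ contains $x$ but not $y$; the second case is symmetric. Hence the topology is $T_0$.

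\textbf{Specialization is a partial order.} Let $(X,\cT)$ be a finite $T_0$ space and define $x\leq_{\cT}y$ iff $x\in\cl\{y\}$. Equivalently, $x\leq_{\cT}y$ iff every open set containing $x$ also contains $y$.
\begin{itemize}
\item Reflexivity holds because $x\in\cl\{x\}$.
\item For transitivity, $x\in\cl\{y\}$ and $y\in\cl\{z\}$ give $\cl\{y\}\subseteq\cl\{z\}$, because $\cl\{z\}$ is closed and contains $y$. Hence $x\in\cl\{z\}$.
\item Antisymmetry is exactly the $T_0$ axiom. If $x\leq_{\cT}y$ and $y\leq_{\cT}x$, then $x$ and $y$ lie in the same open sets. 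By $T_0$ this forces $x=y$.
\end{itemize}

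\textbf{The two constructions are mutually inverse.} This is the step that needs care.

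First, start from an order $\leq$. In $\cT_{\leq}$ the closed sets are the down-sets, so $\cl\{y\}=\downarrow y$. Therefore $x\leq_{\cT_\leq}y$ iff $x\leq y$, and the original order is recovered.

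Second, start from a topology $\cT$. I must show $\cT=\cT_{\leq_\cT}$. Finiteness enters here.
\begin{itemize}
\item For $\subseteq$: every open $U$ is an up-set for $\leq_\cT$. Indeed, if $x\in U$ and $x\leq_\cT y$, then $y\in U$ by the open-set characterization of $\leq_\cT$.
\item For $\supseteq$: for each $x$, let $U_x$ be the intersection of all open sets containing $x$. Since $\cT$ is finite, this is a finite intersection, so $U_x$ is open. By the characterization above, $U_x=\{y : x\leq_\cT y\}$. An arbitrary up-set $A$ equals $\bigcup_{x\in A}U_x$, which is a union of open sets and hence open.
\end{itemize}
Thus the two associations are mutually inverse. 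Finally, the closure defined in the paper's Definition of finite poset, $\cl(A)=\{x : \exists\,a\in A,\ x\leq a\}$, coincides with topological closure, since the closed sets are the down-sets and $\cl(A)$ is the smallest down-set containing $A$.
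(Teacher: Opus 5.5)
Your proof is correct and complete: the $T_0$ check via principal up-sets, the identification of antisymmetry with the $T_0$ axiom, and both round-trip verifications are sound, with finiteness used exactly where it is needed (to make the minimal open neighbourhood $U_x$ open). The paper gives no proof of this statement and simply cites Alexandroff; your argument is the standard one behind that citation.
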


\begin{definition}[Locally closed set]\cite{lipinski2019conley}
A subset $A\subseteq X$ is said to be \emph{locally closed} if it can be
written as the intersection of an open and a closed set.  In the poset
language this is equivalent to the condition that $A$ is
\emph{convex}, i.e.
\[
x,z\in A,\quad x\leq y\leq z \ \Rightarrow\ y\in A.
\]
\end{definition}

Locally closed subsets play a central role because they correspond to
the admissible building blocks in a multivector field.  They ensure that
each multivector interacts with its neighbourhood in a well-defined way,
so that the closure and mouth operations behave consistently with the
underlying topology.

\begin{definition}[Combinatorial multivector field]\cite{lipinski2019conley}
Let $(X,\leq)$ be a finite poset.  A \emph{combinatorial multivector
field} on $X$ is a partition $\mathcal{V}$ of $X$ into finitely many
non-empty, locally closed subsets called \emph{multivectors}.
Elements of the same multivector are considered to belong to a single
local region of the space, and the dynamics is encoded by the relations
between multivectors induced by the order relation of $X$.
\end{definition}

In this setting, one can define an \emph{image} map
$\Pi_{\mathcal{V}}: X\to 2^{X}$ by
\[
\Pi_{\mathcal{V}}(x)= [x]_{\mathcal{V}}\cup \operatorname{cl}([x]_{\mathcal{V}}),
\]
where $[x]_{\mathcal{V}}$ denotes the multivector containing $x$.  The
induced directed graph on the multivectors, with an arrow from $V$ to
$W$ whenever $W\cap\Pi_{\mathcal{V}}(V)\neq\emptyset$, is known as the
\emph{M-graph}.  Its strongly connected components form the Morse
decomposition of the underlying combinatorial system.

A \emph{solution} of a combinatorial dynamical system $\Pi_\cV:X\multimap X$ in $A\subset X$ is a partial map $\varphi:\mathbb{Z}\nrightarrow A$ whose domain, denoted $\dom \varphi$,  is a $\mathbb{Z}$-interval and for any $i,i+1\in \dom \varphi$ the inclusion $\varphi(i+1)\in \Pi_\cV(\varphi(i))$ holds. Let us denote by $\textup{Sol}(A)$ the set of all solutions $\varphi$ such that $\img\varphi\subset A$. $\textup{Sol}(X)$ is the set of all solution of $\Pi_\cV$.
If $\dom\varphi$ is a bounded interval then we say that $\varphi$ is a \emph{path}.
If $\dom\varphi=\ZZ$ then $\varphi$ is a \emph{full solution}. 

A full solution $\varphi : \mathbb{Z} \rightarrow X$ is \emph{left-essential} (respectively \emph{right-essential})
if for every regular $x \in \img\varphi$ the set $\{ t \in \mathbb{Z}\mid \varphi(t) \notin [x]_{\mathcal{V}} \}$ is left-infinite (respectively right-infinite). 
We say that $\varphi$ is \emph{essential} if it is both left- and right-essential.
The collection of all essential solutions $\varphi$ such that $\img\varphi\subset A$ is denoted by $\textup{eSol}(A)$.

The \emph{invariant} part of a set $A\subset X$ is 
$\Inv A := \bigcup \{\img \varphi\mid \varphi\in\esol(A)\}$.
In particular, if $\textup{Inv} A = A$ we say that $A$ is an \emph{invariant set} for a multivector field $\mathcal{V}$.

A closed set $N\subset X$ \emph{isolates} invariant set $S \subset N$ if the following conditions are satisfied:
\begin{enumerate}[label=(\roman*)]
    \item every path in $N$ with endpoints in $S$ is a path in $S$,
    \item $\Pi_{\mathcal{V}}(S) \subset N$.
\end{enumerate}
In this case, $N$ is an \emph{isolating set} for $S$. 
If an invariant set $S$ admits an isolating set then we say that $S$ is an \emph{isolated invariant set}.
The \emph{homological Conley index} of an isolated invariant set $S$ is defined as $\operatorname{Con}(S):=H(\operatorname{cl} S,\mo S)$.

Let $A \subset X$.
By $\bigl\langle A \bigl\rangle_{\mathcal{V}}$ we denote the intersection of 
all locally closed and $\cV$-compatible sets in $X$ containing $A$.
We call this set the $\mathcal{V}$-\emph{hull} of $A$. 
The combinatorial $\alpha$-\emph{limit set} and $\omega$-\emph{limit set} for a full solution $\varphi$ are defined as
\begin{align*}
    & \alpha(\varphi) := \Bigl\langle \bigcap\limits_{t \in \mathbb{Z}^-}\varphi((-\infty,t]) \Bigl\rangle_\mathcal{V}\ , \\
    & \omega (\varphi) := \Bigl\langle \bigcap\limits_{t \in \mathbb{Z}^+}\varphi([t, \infty))  \Bigl\rangle_\mathcal{V}\ .
\end{align*}

\subsubsection{Purpose and Dynamical Interpretation}

The notion of a multivector field was originally designed to bring the
tools of topological dynamics into the discrete and combinatorial
setting.  In a continuous dynamical system, the behaviour of trajectories
is often organized by invariant sets such as equilibria, periodic
orbits, or more complicated recurrent structures.  The central goal is
to describe how these invariant regions are connected and how the
dynamics moves between them.  The same philosophy carries over to the
discrete world once we replace the phase space by a finite poset and the
continuous flow by the combinatorial relation induced by the order.

A \emph{combinatorial multivector field} provides the basic framework for
this description.  Each multivector can be viewed as a coarse
approximation of a local flow cell: a collection of points that evolve
together under the combinatorial dynamics.  The structure of how these
multivectors influence one another, through their closures and mouths,
captures the essential global organization of the system.

For two \emph{combinatorial multivector field} $\mathcal{V}_1$ and $\mathcal{V}_2$, we say that $\mathcal{V}_1\subseteq\mathcal{V}_2$ if:
\begin{align*}
    \forall V \in \mathcal{V}_1, \exists W\in \mathcal{V}_2~|~ V\subseteq W
\end{align*}

\begin{definition}[Morse decomposition]
Let $S\subset X$ be a $\cV$-compatible, invariant set.
Then, a finite collection $\mathcal{M}=\{M_p\subset S\mid p\in\mathbb{P}\}$ is called a \emph{Morse decomposition} of $S$ if there exists a finite poset $(\mathbb{P},\le)$ such that the following conditions are satisfied:
\begin{enumerate}[label=(\roman*)]
    \item $\mathcal{M}$ is a family of mutually disjoint, isolated invariant subsets of $S$,
    \item for every $\varphi\in\esol(S)$ either $\img\varphi \subset M_r$  for an $r \in \mathbb{P}$
or there exist $p, q \in \mathbb{P}$ such that $q > p$,    $\alpha(\varphi)\subset M_q \text{, and } \omega(\varphi)\subset M_p$.
 \end{enumerate}
We refer to the elements of $\mathcal{M}$ as \emph{Morse sets}.
\end{definition}

Let $\mathcal{V}$ be a multivector field on a finite poset $X$ and let
$G_{\mathcal{V}}$ denote the associated $M$--graph, whose vertices are
the multivectors and whose edges are defined by
\[
(V,W)\in E(G_{\mathcal{V}})\quad \Longleftrightarrow \quad
W\cap \Pi_{\mathcal{V}}(V)\neq\emptyset.
\]
The \emph{Morse decomposition} of $\mathcal{V}$ is the collection of
strongly connected components of $G_{\mathcal{V}}$.  Each component
represents a region of recurrent dynamics, meaning that any path within
the $M$--graph that starts in the component can return to it.

The Morse decomposition thus partitions the dynamics into invariant
pieces, ordered by the reachability relation of the $M$--graph.  It
provides a finite and combinatorial description of the recurrent
behaviour of the system, analogous to the decomposition of a continuous
flow into chain-recurrent components.

While the Morse decomposition reveals the structure of recurrence,
finer algebraic invariants can be associated to each component through
homological constructions.  These invariants, called \emph{Conley
indices}, measure the topology of isolated invariant sets and are stable
under perturbations.

\begin{definition}[Conley index \cite{lipinski2019conley}]
Let $M$ be a Morse set of $\mathcal{V}$, i.e.\ a strongly connected
component of $G_{\mathcal{V}}$.  Denote by $\operatorname{cl}(M)$ its
closure in the poset topology and by $\operatorname{mo}(M)$ its mouth.
The \emph{Conley index} of $M$ is defined as the relative homology group
\[
CH_k(M) = H_k\big(\operatorname{cl}(M), \operatorname{mo}(M)\big).
\]
\end{definition}

The Conley index plays a role similar to that of homology groups of
isolating blocks in classical Conley theory.  It detects the type of
recurrent behaviour enclosed by $M$ (for example, whether it corresponds
to a fixed point, a periodic orbit, or a higher-dimensional recurrent
set) and remains invariant under small perturbations of the multivector
field (see \cite{mrozek2022combinatorial}).  In the discrete framework, these indices provide algebraic
signatures of the Morse sets that can be compared, tracked, and
visualized through persistence diagrams.

\medskip

In summary, the purpose of introducing multivector fields is to build a
discrete topological structure that allows us to:
\begin{itemize}
    \item capture local and global recurrence in a purely combinatorial
    form;
    \item define the Morse decomposition as a hierarchy of invariant
    components;
    \item associate algebraic invariants (Conley indices) to these
    components;
    \item and study the stability and persistence of such structures
    under varying parameters.
\end{itemize}
In the next section we transfer these ideas to the graph setting,
where the space $X$ is composed of vertices and edges and the relations
between them are derived from a matrix of weights or probabilities.
\subsection{From Combinatorial to Graph Multivector Fields}

The framework introduced above applies to any finite poset.  In
particular, graphs provide a natural and highly flexible instance of
such a structure.  Since in general graph can also be seen as simplicial complexes, every graph can be regarded as a finite topological
space once we specify an order relation compatible with its incidence
structure.  This correspondence allows us to translate the general
notions of multivectors, closure, mouth, and local closedness into the
language of vertices and edges.

\begin{definition}[Graph poset]
Let $G=(V,E)$ be a finite directed graph.  We define a poset
$(X_G,\leq)$ by setting
\[
X_G = V\cup E,
\]
and declaring that for every edge $e=(v_i,v_j)\in E$ we have
\[
v_i \leq e, \qquad v_j \leq e.
\]
The relation is then extended by reflexivity and transitivity.
Elements of $V$ are called \emph{vertex elements}, and elements of $E$
are called \emph{edge elements}.  The resulting poset encodes the
incidence structure of the graph: vertices precede their incident
edges.
\end{definition}

The $T_0$ topology associated with $(X_G,\leq)$ is generated by these
incidence relations.  In this topology, a set is closed if it contains
every edge together with its incident vertices, and open if it contains
every vertex together with all edges incident to it.  This makes the
notions of closure and mouth intuitive: the closure of a subset includes
its incident elements, while the mouth records the immediate boundary of
interaction between different parts of the graph.

\begin{definition}[Local closedness in graphs]
A subset $A\subseteq X_G$ is said to be \emph{locally closed} if it is
convex with respect to the order relation of the graph poset, that is,
whenever $x,z\in A$ and $x\leq y\leq z$ in $X_G$, we have $y\in A$.  In
combinatorial terms, $A$ is locally closed if and only if it satisfies
the following:
\begin{itemize}
    \item if an edge $(v_i,v_j)\in A$, then its incident vertices
    $v_i,v_j$ also belong to $A$;
    \item if a vertex $v_i\in A$ and an edge $e=(v_i,v_j)$ shares this
    vertex with another vertex $v_j\in A$, then the edge $e$ must also
    belong to $A$.
\end{itemize}
\end{definition}

This local closedness condition ensures that every subset representing a
coherent piece of the graph is topologically consistent: if an element
belongs to it, all elements that are incident through the graph relation
are also contained, up to the chosen level of adjacency.

\subsection{Graph Multivector Fields}

We can now specialize the notion of a multivector field to the graph
setting.

\begin{definition}[Graph multivector field]
Let $G=(V,E)$ be a finite directed graph and $(X_G,\leq)$ its associated
graph poset.  A \emph{graph multivector field} or simply \emph{multivector field} on $G$ is a partition
\[
\mathcal{V}_G = \{V_1, V_2, \dots, V_m\}
\]
of $X_G$ into non-empty, locally closed subsets called
\emph{graph multivectors} or just \emph{multivectord}.  Each $V_i$ may contain both vertices and
edges, and the family $\mathcal{V}_G$ satisfies:
\begin{enumerate}
    \item[(i)] every vertex and every edge of $G$ belongs to exactly one
    multivector;
    \item[(ii)] if two multivectors intersect, they are merged into a
    single multivector (so the family forms a true partition);
    \item[(iii)] the local closedness condition holds for each
    multivector, ensuring compatibility with the graph topology.
\end{enumerate}
\end{definition}

Each multivector represents a local coherent unit of the graph---a
collection of vertices and edges that interact strongly according to the
chosen relation or weighting rule.  The multivector field
$\mathcal{V}_G$ thus provides a coarse description of the graph in terms
of higher-order units.

Given a graph multivector field $\mathcal{V}_G$, one defines the
associated \emph{$M$--graph} in the same way as in the combinatorial
setting (see \cite{lipinski2019conley}): the vertices of the $M$--graph correspond to the multivectors,
and there is a directed edge from $V_i$ to $V_j$ whenever
$V_j \cap \Pi_{\mathcal{V}_G}(V_i) \neq \emptyset$, where
$\Pi_{\mathcal{V}_G}(x) = [x]_{\mathcal{V}_G} \cup
\operatorname{cl}([x]_{\mathcal{V}_G})$.  The strongly connected
components of this $M$--graph form the Morse decomposition of the
original graph structure, and their corresponding Conley indices capture
the algebraic and topological features of the recurrent regions. Here we will focus on invariant sets as qualitative descriptors of global dynamics, rather
than on individual solutions.
\medskip

This construction extends the classical theory of multivector fields (see \cite{dey2022tracking,Woukeng_2024,dey2019persistent}) to
graphs in a way that preserves its topological meaning while enabling
new applications.  It provides a natural discrete framework to study
complex relational systems such as networks, probabilistic processes,
and temporal or quantum graphs under a unified topological formalism.
\subsection{Basic propositions}

The next collection of propositions gathers elementary properties
of graph multivector fields that are frequently used in constructions
and proofs.  We state them here for convenience and reference.

\begin{proposition}[Existence -- trivial multivector fields]
\label{prop:existence}
Let \(G=(V,E)\) be a finite directed graph and let \(X_G=V\cup E\) be
its graph poset.  Then the following are graph multivector fields:
\begin{enumerate}
  \item the \emph{singleton partition} \(\mathcal{V}_{\mathrm{sing}}=\{\{x\} : x\in X_G\}\);
  \item the \emph{whole-space partition} \(\mathcal{V}_{\mathrm{all}}=\{X_G\}\).
\end{enumerate}
Hence graph multivector fields always exist.
\end{proposition}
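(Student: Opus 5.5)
To prove Proposition~\ref{prop:existence}, we check directly, for each of the two families, the three requirements in the definition of a graph multivector field: the family partitions $X_G$ into non-empty sets, the sets are pairwise disjoint (so condition (ii) never forces a merge), and each block is locally closed, i.e.\ convex in $(X_G,\leq)$. Since $G$ is finite, $X_G=V\cup E$ is finite and both families are finite. If $X_G=\emptyset$, both families are empty partitions and the claim is vacuous. We therefore assume $X_G\neq\emptyset$.

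For the singleton partition $\mathcal{V}_{\mathrm{sing}}$, every block $\{x\}$ is non-empty. Distinct blocks are disjoint, and every $x\in X_G$ lies in exactly one block, namely $\{x\}$. For convexity, suppose $x\leq y\leq x$. Antisymmetry of $\leq$ gives $y=x\in\{x\}$. We should record here that the relation generated in the definition of the graph poset is indeed a partial order. This holds because every generating relation $v\leq e$ goes from a vertex element to an edge element, so no chains of length greater than one arise and antisymmetry cannot fail. The same observation also shows that the two combinatorial bullet conditions in the definition of local closedness in graphs are satisfied trivially by singletons.

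For the whole-space partition $\mathcal{V}_{\mathrm{all}}=\{X_G\}$, the single block is non-empty and trivially covers $X_G$ exactly once. Convexity holds vacuously, since any $y$ with $x\leq y\leq z$ already lies in $X_G$. Equivalently, $X_G$ is both open and closed in the Alexandroff topology of Theorem~\ref{thm:finite_top}, and hence locally closed.

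There is no real obstacle in this argument. The only point needing care is the antisymmetry of the generated order used in the singleton case, and the vertex-before-edge structure of $X_G$ settles it. The existence statement then follows immediately.
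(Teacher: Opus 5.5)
Your main argument is correct and is the same as the paper's: both families are partitions of $X_G$ into non-empty blocks, each singleton is convex by antisymmetry, and $X_G$ is trivially convex (indeed clopen). Two of your side remarks, however, are false and should be dropped.

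First, singletons do not all satisfy the combinatorial bullet conditions in the definition of local closedness in graphs. For a singleton edge $A=\{(v_i,v_j)\}$, the first bullet requires $v_i,v_j\in A$, and this fails. That bullet is a closedness condition, not a convexity condition, so the ``if and only if'' asserted in that definition is wrong. The singleton partition is a graph multivector field only under the convexity definition. That is the definition your argument actually relies on, as do the paper's proof and the paper's later use of singleton-edge blocks such as $A_4=\{e_{13}\}$.

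Second, when $X_G=\emptyset$ the family $\{X_G\}=\{\emptyset\}$ is not the empty partition. It has an empty block, so part~2 fails rather than holding vacuously. Simply assume $X_G\neq\emptyset$, as the statement implicitly does.
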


\begin{proof}
Every singleton \(\{x\}\) is locally closed (it is convex in a finite
poset), and singletons form a partition of \(X_G\); thus
\(\mathcal{V}_{\mathrm{sing}}\) satisfies the definition.  The whole
space \(X_G\) is also locally closed (trivially convex) and the single
block \(\{X_G\}\) is a partition, so \(\mathcal{V}_{\mathrm{all}}\) is
also a graph multivector field.  This proves existence.
\end{proof}

\begin{proposition} 
\label{prop:coarsening-lc-fixed}
Let $\mathcal V=\{V_i\}_{i\in I}$ be a multivector field on
$X=V\cup E$ whose blocks are locally closed (with respect to the face-order
Alexandroff topology). Let $J\subset I$ and set
\[
U \;:=\; \bigcup_{j\in J} V_j.
\]
Assume that $U$ is itself locally closed, i.e. $\operatorname{LocalClosure}(U)=U$.
Form the family
\[
\mathcal V' \;:=\; \bigl(\mathcal V\setminus\{V_j:j\in J\}\bigr)\;\cup\;\{U\},
\]
obtained by replacing the blocks $\{V_j:j\in J\}$ by their union $U$.
Then $\mathcal V'$ is a multivector field whose blocks are locally closed.
\end{proposition}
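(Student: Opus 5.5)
The plan is to verify directly the three requirements of a graph multivector field for $\mathcal V'$: it is a partition of $X$, its blocks are non-empty, and each block is locally closed. There is one degenerate case to set aside first. If $J=\emptyset$, then $U=\emptyset$ and $\mathcal V'$ would contain an empty block. So we either assume $J\neq\emptyset$, as is implicit in the statement, or drop $U$ in that case, which leaves $\mathcal V'=\mathcal V$.

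\emph{Partition property.} The blocks $V_i$, $i\in I$, are pairwise disjoint and cover $X$.
\begin{enumerate}
\item[(a)] \emph{Covering.} The union of all blocks of $\mathcal V'$ is
\[
\bigcup_{i\notin J}V_i\;\cup\;U=\bigcup_{i\in I}V_i=X .
\]
\item[(b)] \emph{Disjointness of old blocks.} Two distinct blocks $V_i,V_k$ with $i,k\notin J$ are disjoint because $\mathcal V$ is a partition.
\item[(c)] \emph{Disjointness from $U$.} For $i\notin J$, distributivity gives
\[
V_i\cap U=\bigcup_{j\in J}(V_i\cap V_j)=\emptyset ,
\]
since $i\neq j$ for every $j\in J$.
\end{enumerate}
Hence every element of $X$ lies in exactly one block of $\mathcal V'$, which is condition (i). Condition (ii), the merging requirement, is then automatic because the blocks are pairwise disjoint.

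\emph{Non-emptiness.} Each $V_i$ with $i\notin J$ is non-empty by hypothesis on $\mathcal V$. The set $U$ is non-empty because it contains some $V_j$ with $j\in J\neq\emptyset$.

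\emph{Local closedness.} Each retained block $V_i$ with $i\notin J$ is locally closed by hypothesis. The new block $U$ is locally closed by assumption. The phrase $\operatorname{LocalClosure}(U)=U$ should be read as saying that $U$ is convex in the face order: if $x\le y\le z$ with $x,z\in U$, then $y\in U$. This is equivalent to $U$ being the intersection of an open set and a closed set in the Alexandroff topology, which matches the definition of local closedness.

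The main obstacle is only interpretive. Local closedness is not preserved under unions in general, and this is exactly why it appears as a hypothesis rather than as a conclusion. The proof should state clearly that convexity of $U$ is assumed and not derived. With that understood, the argument is a purely set-theoretic verification of the definition of a graph multivector field.
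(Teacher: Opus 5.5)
Your proof is correct and follows the same route as the paper: a direct set-theoretic check that $\mathcal V'$ is a partition of $X$. Local closedness of the retained blocks is inherited from $\mathcal V$, and that of $U$ is taken as a hypothesis. You are slightly more careful than the paper, which leaves implicit both the disjointness $V_i\cap U=\emptyset$ for $i\notin J$ and the non-emptiness of $U$, and which does not address the degenerate case $J=\emptyset$, where $U=\emptyset$ would violate the non-emptiness requirement on blocks.
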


\begin{proof}
Since $\mathcal V$ is a partition of $X$ and $\{V_j:j\in J\}$ is a subcollection
of pairwise disjoint blocks, the set
\[
\mathcal V\setminus\{V_j:j\in J\}
\]
is a family of pairwise disjoint sets that together with $U$ still cover $X$.
Indeed, every element of $X$ either lies in one of the removed $V_j$ (hence
lies in $U$) or lies in some block of $\mathcal V$ not in the removed
subcollection (and is therefore still covered). Thus $\mathcal V'$ is a
partition of $X$.

It remains to check that every block of $\mathcal V'$ is locally closed.
By hypothesis every block $V_i$ of $\mathcal V$ is locally closed. All
blocks of $\mathcal V'$ coming from $\mathcal V\setminus\{V_j:j\in J\}$
retain this property unchanged. The only new (or modified) block is $U$;
by assumption $\operatorname{LocalClosure}(U)=U$, so $U$ is locally closed.
Therefore every block of $\mathcal V'$ is locally closed.

Consequently $\mathcal V'$ is a partition of $X$ into locally-closed blocks,
i.e. a multivector field, as required.
\end{proof}
\begin{proposition}[Containment of Morse sets under coarsening]
\label{prop:morse-coarsening}
Let $\mathcal V_{1}$ and $\mathcal V_{2}$ be two graph multivector fields 
such that $\mathcal V_{1} \subseteq \mathcal V_{2}$. 
Then for every Morse set $M_1 \in \mathcal M(\mathcal V_{1})$, 
there exists a Morse set $M_2 \in \mathcal M(\mathcal V_{2})$ such that
\[
M_1 \subseteq M_2.
\]
Consequently, each Morse set in $\mathcal M(\mathcal V_{2})$ is a union of Morse sets 
in $\mathcal M(\mathcal V_{1})$, and the number of Morse sets cannot increase.
\end{proposition}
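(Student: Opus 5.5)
The plan is to work at the level of the $M$--graphs and to use the coarsening map between them. Since $\mathcal V_1\subseteq\mathcal V_2$ and both are partitions of $X_G$, each block $V\in\mathcal V_1$ lies in a \emph{unique} block $\rho(V)\in\mathcal V_2$. Uniqueness holds because $V$ is non-empty and the blocks of $\mathcal V_2$ are disjoint. This defines a map
\[
\rho:\ \mathcal V_1\to\mathcal V_2
\]
on the vertex sets of $G_{\mathcal V_1}$ and $G_{\mathcal V_2}$. Throughout, a Morse set is identified with the union of the multivectors in a strongly connected component.

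\textbf{Step 1: $\rho$ is a graph homomorphism up to loops.} Suppose $(V,W)\in E(G_{\mathcal V_1})$, so that $W\cap\Pi_{\mathcal V_1}(V)\neq\emptyset$. Then $W$ meets $V\cup\operatorname{cl}(V)$.
\begin{itemize}
\item Since $V\subseteq\rho(V)$, we have $\operatorname{cl}(V)\subseteq\operatorname{cl}(\rho(V))$, by monotonicity of closure in the Alexandroff topology.
\item Since $W\subseteq\rho(W)$, it follows that $\rho(W)\cap\Pi_{\mathcal V_2}(\rho(V))\neq\emptyset$.
\end{itemize}
Hence $(\rho(V),\rho(W))$ is an edge of $G_{\mathcal V_2}$, or a self-loop when $\rho(V)=\rho(W)$. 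Every vertex $U$ satisfies $U\subseteq\Pi(U)$, so each vertex carries a loop anyway.

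\textbf{Step 2: components map into components.} A directed path in $G_{\mathcal V_1}$ is sent by $\rho$ to a directed walk in $G_{\mathcal V_2}$. Therefore mutual reachability is preserved, and $\rho$ maps each strongly connected component $C$ of $G_{\mathcal V_1}$ into a single component $C'$ of $G_{\mathcal V_2}$. Passing to unions of multivectors, every $V\in C$ satisfies $V\subseteq\rho(V)\subseteq\bigcup C'$. This gives $M_1\subseteq M_2$, which is the first claim.

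\textbf{Step 3: the consequence.} Morse sets of $\mathcal V_1$ partition $X_G$, because the strongly connected components partition the vertices of the $M$--graph and the multivectors partition $X_G$. The same holds for $\mathcal V_2$. So each $M_2$ is the disjoint union of the $M_1$ that meet it, and each such $M_1$ is contained in $M_2$ by Step 2. Thus we have a surjection from $\mathcal M(\mathcal V_1)$ onto $\mathcal M(\mathcal V_2)$, and $|\mathcal M(\mathcal V_2)|\le|\mathcal M(\mathcal V_1)|$.

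\textbf{Main obstacle: the convention for Morse sets.} The delicate point is which components count as Morse sets. If, as the paper states, the Morse decomposition consists of \emph{all} strongly connected components, the argument above is complete. If only recurrent components are kept (those with a cycle, in line with $\esol$ and invariance), Step 2 still works, because a cycle maps to a closed walk. The one subtle case is a trivial component $\{V\}$ without a self-loop; there the self-loops observed in Step 1 resolve it, since every vertex has one. However, the covering statement in Step 3 would then need the image of non-recurrent components to be handled. I would therefore fix the convention explicitly as all strongly connected components, matching the paper's definition of the Morse decomposition of $\mathcal V$.
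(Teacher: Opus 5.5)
Your proof is correct and follows the same line as the paper's: coarsening preserves $M$--graph edges, so directed paths persist and each strongly connected component of $G_{\mathcal V_1}$ lands inside one of $G_{\mathcal V_2}$. Your quotient map $\rho$ makes precise what the paper loosely calls an ``inclusion'' $G_1\subseteq G_2$ (the two graphs actually have different vertex sets), and your partition argument in Step 3 justifies the union and counting claims that the paper only asserts.

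One correction to your aside on conventions. Suppose only components carrying essential solutions were kept as Morse sets. Then the statement genuinely fails: with $e=(v,w)$, the critical vertex $\{v\}$ can be absorbed into the regular block $\{v,e\}$, which forms its own component and supports no essential solution. So fixing the convention as all strongly connected components, as the paper does, is necessary rather than merely convenient.
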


\begin{proof}
Assume that $\mathcal V_{1} \subseteq \mathcal V_{2}$, 
so that every multivector in $\mathcal V_{1}$ is contained in a multivector of 
$\mathcal V_{2}$.  
This inclusion induces an inclusion of the associated $M$--graphs:
\[
G_{1} \subseteq G_{2}.
\]
Each Morse set corresponds to a strongly connected component (SCC) of its $M$--graph. 
Since $G_{2}$ contains all edges of $G_{1}$ and possibly additional ones, 
any directed path in $G_{1}$ remains a directed path in $G_{2}$. 
When new edges are added that connect two previously disjoint SCCs and form a directed cycle 
between them, these SCCs merge into a single strongly connected component.  
Thus, each SCC of $G_{1}$ is contained in some SCC of $G_{2}$, 
and every Morse set at the coarser level is a union of Morse sets at the finer level.  
As merging cannot produce new components, the total number of Morse sets decreases or remains constant.
\end{proof}

\begin{remark}
This proposition formalizes the intuitive fact that new Morse sets created
after coarsening are never genuinely new entities: each arises as a union
of previous recurrent components.  Hence the family of Morse
decompositions evolves monotonically across any filtration of coarsenings.
This property underlies the stability of persistence diagrams
(see~Edelsbrunner--Harer~\cite[Thm.~7.3]{EdelsbrunnerHarer2010CT} and
Chazal--de~Silva--Oudot~\cite[Thm.~3.1]{ChazalDeSilvaOudot2016Stability}).
\end{remark}

\medskip
\noindent
The propositions above illustrate that graph multivector fields form a
flexible yet well-structured framework: they always exist, can be
refined or coarsened by merging locally closed parts, and their
associated $M$--graphs give rise to finite Morse decompositions that
summarize the recurrent organization of the system.  What remains is to
explain how such fields are \emph{constructed} from data.  In practice,
graph multivector fields arise from weighted or probabilistic relations
that quantify the interaction strength between vertices and edges.  The
construction process determines which elements of the graph should be
merged into a single multivector and how the structure evolves as a
function of a chosen threshold.

\section{Construction of Graph Multivector Fields}
\label{sec:construction}

The aim of this section is to formalize the way in which graph
multivector fields are obtained from numerical or logical data.  In most
applications, a graph $G=(V,E)$ is endowed with an additional matrix of
relations that encodes the strength or probability of interactions
between its elements.  These relations can represent transition
probabilities in a Markov process, similarity scores in a data set,
weights in a neural or transportation network, or coupling coefficients
in a quantum or temporal graph.  The goal is to translate these
quantitative relations into a combinatorial structure---the graph
multivector field---that preserves the topological coherence of the
graph while capturing the strength of connections.

\subsection{Relation matrices}

\begin{definition}[Inclusion between graphs]
\label{def:graph-inclusion}
Let $G_{1}=(V,E_{1})$ and
$G_{2}=(V,E_{2})$ be two directed graphs constructed from
the same vertex set $V$.
We say that $G_{1}$ is \emph{included} in $G_{2}$,
and write
\[
G_{1} \subseteq G_{2},
\]
whenever $E_{1}\subseteq E_{2}$.
Inclusion thus means that every edge appearing at level $E_1$ also
appears at level $E_2$.
\end{definition}
 
\subsubsection*{Thresholded relation matrices}

Let \(W = [W(i,j)]_{1\le i,j\le n}\) be a relation matrix on the
finite vertex set \(V=\{v_1,\dots,v_n\}\).  We do \emph{not} assume any
symmetry or stochastic property of \(W\); its entries are arbitrary real
numbers (typically normalized to \([0,1]\) in applications).

For a real parameter \(\lambda\) define the {\bf thresholded relation matrix}
\(w_\lambda\) by the entrywise rule
\begin{equation}\label{eq:threshold-matrix}
    w_\lambda(v_i,v_j) \;=\; 
    \begin{cases}
      1, & \text{if } W(i,j) > \lambda,\\[4pt]
      0, & \text{otherwise.}
    \end{cases}
\end{equation}
Equivalently, \(w_\lambda = \mathbf{1}_{\{W>\lambda\}}\) as an entrywise
indicator.

We then view \(w_\lambda\) as the (binary) relation used by the merging
algorithm: a (directed) trigger for merging the vertex \(v_i\) with the
edge representing \(\{v_i,v_j\}\) is the condition \(w_\lambda(v_i,v_j)=1\).
(Recall our convention that each unordered pair \(\{v_i,v_j\}\) is represented
by a single edge \((v_i,v_j)\) with \(i<j\); both asymmetric comparisons
\(w_\lambda(v_i,v_j)\) and \(w_\lambda(v_j,v_i)\) may independently trigger
merging into that same edge element.)

\medskip

\begin{enumerate}
  \item \textbf{Monotonicity in \(\lambda\).} For fixed indices \(i,j\) the map
    \(\lambda \mapsto w_\lambda(v_i,v_j)\) is nonincreasing: if
    \(\lambda_1 < \lambda_2\) then
    \[
      w_{\lambda_1}(v_i,v_j) \ge w_{\lambda_2}(v_i,v_j).
    \]
    In words: lowering the threshold \(\lambda\) can only create additional
    ones (make more relations active), raising \(\lambda\) can only turn
    ones into zeros.

  \item \textbf{Induced graphs and inclusion direction.}  Let \(G_\lambda\)
    be the graph induced by \(w_\lambda\) via the edge set
    \(\{(v_i,v_j)\mid i<j,\ w_\lambda(v_i,v_j)=1 \text{ or } w_\lambda(v_j,v_i)=1\}\).
    Because \(w_\lambda\) is nonincreasing in \(\lambda\), the family
    \((G_\lambda)_\lambda\) is nested \emph{in the opposite} direction:
    if \(\lambda_1 < \lambda_2\) then
    \[
      G_{\lambda_1} \supseteq G_{\lambda_2}.
    \]
    (Lower thresholds include more active relations.)
    We can decide to keep the notation where increasing the parameter
    produces coarser partitions, simply reparameterize by
    \(\mu=-\lambda\) or reverse the order in all statements; either choice is
    fine as long as the convention is declared clearly.

\end{enumerate}
The thresholding will be used in this paper to dynamically monitor the progression of the relations encoded in our relation matrix. This can be for example relations relations between states in a Markov chain.

\subsection{Merging rules}
\label{sec:merging-rules}

Given a finite graph $G=(V,E)$ with
\[
E=\{(v_i,v_j)\mid i<j,\ v_i,v_j\in V\},
\]
and an original relation matrix $W=[W(i,j)]_{i,j=1}^n$,
we define for each threshold $\lambda\in\mathbb{R}$ the \emph{thresholded relation matrix}
$w_\lambda=[w_\lambda(v_i,v_j)]_{i,j=1}^n$ by
\begin{equation}\label{eq:wlambda}
w_\lambda(v_i,v_j)=
\begin{cases}
1,& \text{if } W(i,j)>\lambda,\\[3pt]
0,& \text{otherwise.}
\end{cases}
\end{equation}
No symmetry is assumed for $W$, so $w_\lambda(v_i,v_j)$ and
$w_\lambda(v_j,v_i)$ may differ.
The matrix $w_\lambda$ encodes which relations are considered active at
threshold $\lambda$ and drives the merging process in the construction of
the graph multivector field~$\mathcal V_\lambda$.

\medskip
The combinatorial space on which $\mathcal V_\lambda$ is defined is
\[
X := V \cup E,
\]
which contains both the vertices and the edges of the graph.
The set $E$ is fixed for all thresholds, while the active relations in
$w_\lambda$ determine how the elements of $X$ are merged.

\medskip
Starting from singleton vertices and singleton edges, we iteratively apply the
following deterministic rules to obtain a well-defined partition
$\mathcal V_\lambda$:

\begin{itemize}
    \item \textbf{Inclusion of all elements.}
    Every vertex $v_i\in V$ and every edge $e=(v_i,v_j)\in E$
    appears in $\mathcal V_\lambda$.  
    Even if no relation involving an edge is active, that edge remains as a
    singleton multivector.

    \item \textbf{Relation-based merging.}
    Merging occurs whenever a relation is active, that is, when
    $w_\lambda(v_i,v_j)=1$ or $w_\lambda(v_j,v_i)=1$.  Specifically:
    \begin{itemize}
        \item If $w_\lambda(v_i,v_j)=1$, merge the vertex $v_i$ with the edge
              $(v_i,v_j)$.
        \item If $w_\lambda(v_j,v_i)=1$, merge the vertex $v_j$ with the edge
              $(v_i,v_j)$.
        \item If both relations are active, merge $v_i$, $v_j$, and the edge
              $(v_i,v_j)$ into a single multivector.
    \end{itemize}

    \item \textbf{Transitive merging.}
    If two multivectors share an element, they are merged into a single one.
    This operation is applied transitively: if
    $P_1\cap P_2\neq\varnothing$ and $P_2\cap P_3\neq\varnothing$,
    then $P_1$, $P_2$, and $P_3$ are all merged together.
    The transitive rule guarantees that $\mathcal V_\lambda$ forms a true
    partition of $X$.

    \item \textbf{Singleton retention.}
    Any vertex or edge not involved in a merge remains as a singleton
    multivector in $\mathcal V_\lambda$.
    This ensures that the partition always covers $X$ completely.

    \item \textbf{Monotonicity with respect to $\lambda$.}
    The function $\lambda \mapsto w_\lambda$ is nonincreasing:
    decreasing the threshold activates more relations.
    Consequently, if $\lambda_1<\lambda_2$, then
    \[
        \mathcal V_{\lambda_1} \succeq \mathcal V_{\lambda_2}.
    \] 

    \item \textbf{Critical values.} The matrix \(W\) has only finitely many
    distinct entries.  The multivector field \(\mathcal V_\lambda\) (and the
    induced $M$–graph) can change only when \(\lambda\) crosses a value in
    the finite set \(\{\,W(i,j): 1\le i,j\le n\,\}\).  Thus there are finitely
    many distinct \(\mathcal V_\lambda\) as \(\lambda\) varies.

  \item \textbf{Directionality / single-edge representation.} Although each
    unordered pair \(\{v_i,v_j\}\) is represented by a single edge element
    \((v_i,v_j)\) with \(i<j\), the two comparisons \(W(i,j)>\lambda\) and
    \(W(j,i)>\lambda\) are evaluated independently.  Either (or both) can
    trigger merges that include the same edge element.  This preserves
    asymmetric information in the merging process while keeping the edge set
    non-redundant.
\end{itemize}

\medskip
Together these rules ensure that:
\begin{enumerate}
    \item $\mathcal V_\lambda$ is a partition of $X=V\cup E$ into
          locally closed, edge-closed subsets;
    \item the family $(\mathcal V_\lambda)_\lambda$ is finite and evolves
          monotonically by coarsening as $\lambda$ increases;
    \item all edges and vertices are always represented, even when inactive.
\end{enumerate}

\begin{remark}
\label{rem:edges_singletons-final}
Edges corresponding to weak or inactive relations, i.e.\ those for which
both $W(i,j)\le\lambda$ and $W(j,i)\le\lambda$, remain singleton
multivectors throughout the filtration.  
These isolated edges often represent negligible or transient interactions
and correspond to trivial Morse sets in the $M$--graph representation.
\end{remark}
\begin{remark}
    We should note that the reason here we construct multivector fields is that they can encode locally
coherent interaction patterns that cannot be captured by single vertices or edges, capturing more information.
\end{remark}

The rules above define the foundation of the construction algorithm
presented in the next subsection.

We will now describe the construction of the graph multivector field
$\mathcal V_\lambda$ associated with a relation matrix $W_\lambda$.
The setting is a finite directed graph
$G=(V,E)$ in which all possible edges between distinct vertices are present.
The edge set $E$ does not depend on $\lambda$; the threshold $\lambda$
controls only the relations that are strong enough to trigger merging in
the construction of the multivector field.

\medskip
\noindent
The underlying combinatorial space on which $\mathcal V_\lambda$ is defined is
\[
X := V \cup E,
\]
that is, the union of all vertices and all directed edges.
Each element of $X$ is considered as an atomic object, and edges are distinct
from their endpoints.  At the beginning of the process, every vertex and every
edge is represented by a singleton multivector.  These singletons are then
merged following the rules of Section~\ref{sec:merging-rules} according to the
values of $W_\lambda$.

\begin{remark}
\label{rem:initial_partition}
Because $E$ is fixed across thresholds, every vertex and edge of $G$ appears
in every multivector field $\mathcal V_\lambda$.  If an edge never satisfies a
merging condition at threshold $\lambda$, it remains a singleton multivector.
This ensures that $\mathcal V_\lambda$ always partitions the entire space $X$
and that the family $(\mathcal V_\lambda)_\lambda$ evolves by coarsening,
never by deletion.
\end{remark}

\subsection{Algorithmic construction of the graph multivector field}
\label{sec:algorithm}
In this subsection, we start with a relation matrix, obtained from a graph $G=(V,E)$, with $V=\{v_1,v_2,v_3,...,v_n\}$, the set of vertices, and the goal is to be able to construct a \emph{ graph combinatorial multivector field}, a tool that will be central in our study.

We now describe the algorithm that constructs the graph multivector field
$\mathcal V_\lambda$ associated with a general relation matrix
$W_\lambda = [w_\lambda(v_i,v_j)]_{i,j=1}^n$, without any symmetry or
normalization assumption.  
The construction begins with all vertices and all (undirected) edges of the
graph as singleton multivectors, then merges them progressively according to
the relation values compared to the threshold~$\lambda$.

The edge set is defined as
\[
E = \{ (v_i,v_j) \mid i < j,\ v_i,v_j \in V \},
\]
so that each pair of vertices is represented by a single edge, avoiding
repetition of $(v_i,v_j)$ and $(v_j,v_i)$.  
The resulting multivector field $\mathcal V_\lambda$ is a partition of
$X = V \cup E$ that evolves monotonically with $\lambda$.

\begin{algorithm}
\caption{Construction of the Graph Multivector Field from a Relation Matrix}
\label{alg:graph-multivector-field}
\begin{algorithmic}[1]
\Require Relation matrix \( W = [W(i,j)] \), threshold \( \lambda \)
\Ensure Graph multivector field \( \mathcal{V}_\lambda \)

\State Initialize \( \mathcal{V}_\lambda \gets \emptyset \) 
      \Comment{Start with an empty family of multivectors}
\State Define \( V = \{v_1, v_2, \dots, v_n\} \) as the vertex set
\State Define \( E = \{(v_i,v_j) : i < j,\ v_i,v_j \in V\} \) 
      \Comment{Avoid duplicated edges}

\Comment{Step 1: Threshold the relation matrix}
\ForAll{\( i,j \) with \( i \neq j \)}
    \State \( w_\lambda(v_i,v_j) \gets 
            \begin{cases}
            1, & \text{if } W(i,j) > \lambda,\\
            0, & \text{otherwise.}
            \end{cases} \)
\EndFor

\Comment{Step 2: Initialize all vertices and edges as singleton multivectors}
\ForAll{ \( v_i \in V \) }
    \State \( \mathcal{V}_\lambda \gets \mathcal{V}_\lambda \cup \{\{v_i\}\} \)
\EndFor
\ForAll{ \( (v_i,v_j) \in E \) }
    \State \( \mathcal{V}_\lambda \gets \mathcal{V}_\lambda \cup \{\{(v_i,v_j)\}\} \)
\EndFor

\Comment{Step 3: Merge vertices with edges based on active relations}
\ForAll{ \( (v_i,v_j) \in E \) }
    \If{ \( w_\lambda(v_i,v_j) = 1 \) }
        \State Merge \( v_i \) with edge \( (v_i,v_j) \)
        \State \( \mathcal{V}_\lambda \gets 
            \mathcal{V}_\lambda \setminus \{\{v_i\}, \{(v_i,v_j)\}\} 
            \cup \{\{v_i,(v_i,v_j)\}\} \)
    \EndIf
    \If{ \( w_\lambda(v_j,v_i) = 1 \) }
        \State Merge \( v_j \) with edge \( (v_i,v_j) \)
        \State \( \mathcal{V}_\lambda \gets 
            \mathcal{V}_\lambda \setminus \{\{v_j\}, \{(v_i,v_j)\}\} 
            \cup \{\{v_j,(v_i,v_j)\}\} \)
    \EndIf
\EndFor

\Comment{Step 4: Merge overlapping multivectors transitively}
\ForAll{ pairs \( A,B \in \mathcal{V}_\lambda \) }
    \If{ \( A \cap B \neq \emptyset \) }
        \State Merge \( A \) and \( B \): 
            \( \mathcal{V}_\lambda \gets 
              \mathcal{V}_\lambda \setminus \{A,B\} 
              \cup \{A \cup B\} \)
    \EndIf
\EndFor

\State \Return \( \mathcal{V}_\lambda \) 
      \Comment{Return the multivector field at threshold \( \lambda \)}
\end{algorithmic}
\end{algorithm}

\begin{remark}
\label{rem:directionality-final}
Although $E$ contains only one copy of each edge $(v_i,v_j)$ with $i<j$,
 Algorithm~\ref{alg:graph-multivector-field} still accounts for the asymmetry of the matrix $W_\lambda$:
if $w_\lambda(v_i,v_j)>\lambda$ or $w_\lambda(v_j,v_i)>\lambda$, then
the corresponding vertices and the edge $(v_i,v_j)$ are merged.  
This allows directional effects to influence the merging process without
duplicating edges.
\end{remark}

\begin{remark}
\label{rem:singleton_edges}
If an edge $(v_i,v_j)\in E$ never satisfies any merging condition,
it remains as a singleton multivector in $\mathcal V_\lambda$.
These singleton edges still participate in the $M$--graph and may
form trivial Morse sets.  This guarantees that no element of the
graph is ever discarded, preserving a complete combinatorial
representation of the system for all thresholds.
\end{remark}

\medskip

\subsection*{Structural properties (local-closedness version)}
\label{sec:properties-locallyclosed}
\begin{proposition}[Multivectors are connected]
\label{rem:multivectors-connected}
Every block produced by Algorithm~\ref{alg:graph-multivector-field} is a
connected subset of the incidence graph on \(X=V\cup E\).

\begin{proof}
The algorithm starts from singleton blocks (vertices and edges), which are
trivially connected. Each merge step combines two blocks whenever there is
an active relation between elements in the two blocks (a vertex-edge trigger
or the symmetric check). Merging two connected sets that share an edge or a
vertex yields a connected union. The transitive merging step further
unites any intersecting blocks; unions of blocks that intersect are
connected because intersection provides the linking element. Hence every
block produced at every stage is connected, and so is the final block set.
\end{proof}
\end{proposition}

\begin{proposition}[Connected subsets are locally closed]
\label{prop:connected-implies-locallyclosed}
Let \(X=V\cup E\) be the incidence set of a finite graph \(G=(V,E)\) equipped
with the face order (so \(v\le e\) iff \(v\) is an endpoint of \(e\)) and
the associated upper Alexandroff topology. If \(A\subset X\) is connected
(in the induced incidence graph), then \(A\) is locally closed.
\end{proposition}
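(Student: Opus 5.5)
The plan is to argue directly from the order-theoretic definition of local closedness (convexity) in the graph poset, and to observe that the poset has height one, so that convexity holds for \emph{every} subset; connectedness of \(A\) will then play no role. Concretely, I would show that for any \(x,y,z\in X=V\cup E\) with \(x\le y\le z\) we must have \(x=y\) or \(y=z\). Then \(x,z\in A\) forces \(y\in A\).

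The first step is to describe the order relation explicitly. By the definition of the graph poset, the generating relations are \(v\le e\) for a vertex \(v\) that is an endpoint of an edge \(e\). Reflexive–transitive closure adds nothing new, because no generating relation has an edge on the left or a vertex on the right, so no two non-trivial relations compose. Hence \(x<y\) holds if and only if \(x\in V\), \(y\in E\), and \(x\) is an endpoint of \(y\). The second step is the chain argument. Suppose \(x\le y\le z\) with \(x\ne y\) and \(y\ne z\). Then \(x<y\) forces \(y\in E\), while \(y<z\) forces \(y\in V\). Since \(V\cap E=\varnothing\), this is a contradiction, so one of the two inequalities is an equality. In either case \(y\in\{x,z\}\subseteq A\), so \(A\) is convex, i.e.\ locally closed in the sense of the Definition of locally closed set. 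Equivalently, \(A=\operatorname{cl}(A)\cap U\) for the up-set \(U=\{y:\exists a\in A,\ a\le y\}\).

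The main point needing care, rather than a genuine obstacle, is reconciling this with the paper's informal combinatorial reformulation in the Definition of local closedness in graphs. Its second bullet (an edge between two vertices of \(A\) must lie in \(A\)) is not implied by convexity: for \(A=\{v_i,v_j\}\) with no edge in \(A\), no chain \(v_i\le y\le v_j\) exists. I would therefore state explicitly that the proof uses the official poset definition (convexity / intersection of an open and a closed set). I would add a remark that, under this definition, the connectedness hypothesis is superfluous. In particular, the blocks produced by Algorithm~\ref{alg:graph-multivector-field}, which are connected by Proposition~\ref{rem:multivectors-connected}, are locally closed, and this is what is needed downstream.
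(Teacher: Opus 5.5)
Your proof is correct, and it takes a slightly different route from the paper's. The paper first computes $\operatorname{cl}(A)=A\cup V_A$ explicitly and deduces $\operatorname{mo}(A)\subseteq V$. It then notes that every set of vertices is a down-set, since vertices are minimal, so the mouth is closed. Finally it invokes the criterion ``$A$ is locally closed iff $\operatorname{mo}(A)$ is closed.'' You instead verify the official convexity definition directly: the poset has height one, so no chain $x<y<z$ exists.

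Both arguments rest on the same structural fact, namely that vertices are minimal and edges maximal. Neither actually uses connectedness, so both prove that \emph{every} subset of $X$ is locally closed. You say this explicitly; the paper's proof carries the hypothesis without ever invoking it.

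Your version is more self-contained. It works from the paper's stated poset definition and does not need the mouth-closed characterization, which the paper's proof asserts as ``by definition'' although it is really a separate lemma about finite $T_0$ spaces. The paper's route has the side benefit of producing explicit formulas for closure and mouth, which are reused later in the Conley index computations.

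Your caution about the combinatorial reformulation in the ``Local closedness in graphs'' definition is justified, and it applies even more broadly than you state. The first bullet there is a closedness condition, not a convexity condition. It already fails for blocks such as $\{e_{13}\}$ and $\{v_1,e_{12}\}$ in the paper's own example. Basing the proof on the poset definition is therefore the right choice, and it is what the paper's proof implicitly does as well.
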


\begin{proof}
We work with the face order and the corresponding upper Alexandroff closure:
for any set \(S\subset X\),
\[
\operatorname{cl}(S)=\{\,y\in X : y\le x \text{ for some } x\in S\,\}.
\]
We can see that the only nontrivial order relations are vertex \(\le\) edge
relations: if \(e=(v_i,v_j)\in E\) then \(v_i\le e\) and \(v_j\le e\),
and there are no relations \(e\le v\) or non-incidence relations.

Let \(A\subset X\) be connected. We compute \(\operatorname{cl}(A)\).
By the closure formula above, any element added to \(A\) by closure must
be a face of some element of \(A\). Because only vertices are faces of
edges, the closure of \(A\) is obtained by adjoining to \(A\) those
vertices that are endpoints of edges in \(A\). Concretely define
\[
V_A := \{\, v\in V : \exists\, e\in A\cap E \text{ with } v\text{ an endpoint of }e \,\}.
\]
Then
\[
\operatorname{cl}(A)=A\ \cup\ V_A.
\]
(Vertices in \(A\) are already in the union; vertices not incident to any
edge of \(A\) do not belong to \(V_A\).)

Therefore the mouth of \(A\) is precisely
\[
\operatorname{mo}(A)=\operatorname{cl}(A)\setminus A = V_A\setminus (A\cap V).
\]
In particular \(\operatorname{mo}(A)\subseteq V\), i.e.\ the mouth is a
subset of the vertex set.

In the upper Alexandroff topology any singleton vertex \(\{v\}\) is closed,
because there are no elements strictly below a vertex in the face order.
Consequently any subset of \(V\) is closed (finite union of closed singletons).
Hence \(\operatorname{mo}(A)\) is closed.

Since the mouth then \(A\) is locally
closed by definition (local-closedness = mouth is closed). This completes the
proof.
\end{proof}

\begin{remark} 
\label{rem:connected-union-locallyclosed}
In the face-order Alexandroff topology, any connected subset of
$X=V\cup E$ is locally closed
(Proposition~\ref{prop:connected-implies-locallyclosed}).
Therefore, when the union
$U=\bigcup_{j\in J}V_j$ of blocks in a multivector field is connected,
the hypothesis $\operatorname{LocalClosure}(U)=U$ in
Proposition~\ref{prop:coarsening-lc-fixed} is automatically satisfied.
In this frequent case, checking local closedness is unnecessary:
the connectedness of $U$ already guarantees it.
\end{remark}

\begin{proposition} 
\label{prop:termination-lc-final}
For any relation matrix $W$ from a finite graph \(G=(V,E)\), and any threshold \(\lambda\), 
Algorithm~\ref{alg:graph-multivector-field} terminates after finitely many
operations and outputs for the fixed threshold $\lambda$ a graph multivector field
\(\mathcal V_\lambda\), that is a partition of \(X = V \cup E\) into
nonempty, connected, and hence locally closed blocks.
\end{proposition}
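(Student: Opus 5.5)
The argument splits into three parts: termination, the partition property, and local closedness of the blocks. For the last part we rely on Proposition~\ref{rem:multivectors-connected} and Proposition~\ref{prop:connected-implies-locallyclosed}. We work with the finite set $X=V\cup E$, where $|V|=n$ and $|E|=\binom{n}{2}$, so $|X|=n+\binom{n}{2}$.

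\emph{Termination.} Step~1 sets $n(n-1)$ entries of $w_\lambda$, and Step~2 inserts exactly $|X|$ singletons. Step~3 loops over $|E|$ edges and performs at most two merges per edge. For Step~4 we use a potential argument: every successful merge replaces two blocks $A\neq B$ by the single block $A\cup B$, so the number of blocks drops by one. The number of blocks is bounded below by $1$, so at most $|X|-1$ merges can occur. Each scan over pairs is finite, so we interpret Step~4 as ``repeat until no two distinct blocks intersect.'' This loop terminates after at most $|X|-1$ passes. The total number of operations is therefore polynomial in $n$.

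\emph{Partition.} After Step~2 the family is the singleton partition, and Proposition~\ref{prop:existence} shows this is a valid covering of $X$ by nonempty sets. No operation ever deletes an element: each update replaces blocks by their union, so coverage and nonemptiness are preserved. Step~3 can temporarily produce overlaps, for example when a vertex $v_i$ has already been absorbed into $\{v_i,(v_i,v_k)\}$ and a new block $\{v_i,(v_i,v_j)\}$ is created. This is the main subtlety we must handle. The cleanest way to handle it is to interpret ``merge $x$ with $y$'' as merging the current blocks containing $x$ and $y$, a union--find view. Alternatively, we accept the overlaps and observe that Step~4 runs to a fixed point. At that fixed point no two distinct blocks intersect, and since coverage was preserved, the family is a partition of $X$. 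Either way, the output equals the partition into equivalence classes of the equivalence relation generated by the active vertex--edge triggers. This description also shows that the output is independent of the processing order.

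\emph{Local closedness.} We then show that every block is connected in the incidence graph on $X$, by induction on the operations. Singletons are connected. A Step~3 merge joins a vertex $v_i$ with an incident edge $(v_i,v_j)$, which are adjacent in the incidence graph. A Step~4 merge joins two connected sets sharing an element, and such a union is connected. This is exactly the content of Proposition~\ref{rem:multivectors-connected}. Proposition~\ref{prop:connected-implies-locallyclosed} then gives that each block is locally closed: its mouth is a set of vertices, hence closed. Combining the three parts, $\mathcal V_\lambda$ is a graph multivector field in the sense of the definition. The step needing the most care is the partition step, because the literal pseudocode of Step~3 can create overlapping blocks, so we must fix the fixed-point reading of Step~4.
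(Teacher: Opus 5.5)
Your proposal is correct and follows the paper's proof essentially step for step. Both use a block-count potential for termination, coverage and disjointness invariants for the partition property, and induction on merges for connectedness followed by Proposition~\ref{prop:connected-implies-locallyclosed}. Your treatment of the overlapping blocks that the literal Step~3 creates is more careful than the paper's, which simply asserts that Step~3 preserves disjointness.

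One small correction: under the literal fixed-point reading, the number of blocks after Step~3 can exceed $|X|$. For example, $K_4$ with all relations active yields the $12$ pairs $\{v,e\}$, while $|X|=10$. So the bound of $|X|-1$ merges, which the paper also states, holds only for the union--find reading. Finiteness and the polynomial bound are unaffected.
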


\begin{proof}
We verify termination and correctness directly from the steps of
Algorithm~\ref{alg:graph-multivector-field}.

\paragraph{(1) Finiteness and termination.}
The algorithm operates on the finite set
\(X = V \cup E\), where \(|V|=n\) and \(|E| = \frac{n(n-1)}{2}\).
Initialization (Steps~1–2) creates the family of singletons
\(\{\{x\}:x\in X\}\).
Each subsequent merge operation (Steps~3–4) replaces at least two distinct
blocks by their union.  Hence every successful merge strictly decreases the
number of blocks in \(\mathcal V_\lambda\).  Because the number of blocks is
bounded below by \(1\), the process can perform at most \(|X|-1\) successful
merges and therefore terminates in finitely many steps.

\paragraph{(2) Partition property.}
At every stage of the algorithm, the following invariants hold:
\begin{itemize}
    \item all blocks are nonempty subsets of \(X\);
    \item the union of all blocks equals \(X\);
    \item distinct blocks are disjoint.
\end{itemize}
These properties are clear at initialization.  
Step~3 (relation-based merges) replaces two disjoint singletons by their
union, which preserves both coverage and disjointness.  
Step~4 (transitive merging) explicitly checks for nonempty intersections and
replaces intersecting pairs by their union, ensuring that in the next
iteration no overlaps remain.  
When the algorithm stops, no further merges are triggered, so all blocks are
pairwise disjoint and their union remains \(X\).  Thus
\(\mathcal V_\lambda\) is a partition of \(X\) into nonempty blocks.

\paragraph{(3) Connectedness and local closedness.}
By construction, the algorithm starts with singleton blocks, which are
connected in the incidence graph on \(X\).  
Each merge in Step~3 or Step~4 joins two blocks that share a vertex or an
edge; this preserves connectedness because the intersection element forms a
path between them.  
Inductively, every intermediate block and every final block is connected.
By Proposition~\ref{prop:connected-implies-locallyclosed},
every connected subset of \(X\) is locally closed in the face-order
Alexandroff topology.  Therefore all blocks in the final family
\(\mathcal V_\lambda\) are locally closed.

\paragraph{(4) Conclusion.}
Algorithm~\ref{alg:graph-multivector-field} thus terminates after finitely
many merges and produces a family \(\mathcal V_\lambda\) that is a partition
of \(X\) into nonempty, connected, and locally closed blocks.
\end{proof}

\begin{proposition}[Filtration and monotonicity]
\label{prop:filtration-monotonic}
Let \(W=[W(i,j)]\) be the original relation matrix and for each threshold
\(\lambda\in\mathbb{R}\) define the binary matrix
\(w_\lambda(i,j)=\mathbf{1}_{\{W(i,j)>\lambda\}}\).
Let \(\mathcal V_\lambda\) be the graph multivector field produced by
Algorithm~\ref{alg:graph-multivector-field}. Then:

\begin{enumerate}
\item If \(\lambda_1 < \lambda_2\), then
      \[
      \mathcal V_{\lambda_1} \succeq \mathcal V_{\lambda_2},
      \]
      i.e. the partition \(\mathcal V_{\lambda_1}\) is a coarsening of
      \(\mathcal V_{\lambda_2}\).
      Equivalently, as \(\lambda\) decreases, new merges may occur but
      existing merges are never undone.

\item The associated $M$--graphs satisfy a monotonicity relation:
      when \(\lambda\) decreases, every adjacency relation present at
      \(\lambda_2\) remains present at \(\lambda_1<\lambda_2\),
      possibly together with new ones.  
      Consequently, the strongly connected components (Morse sets)
      can only grow or merge; no new disconnected components appear.
\end{enumerate}
\end{proposition}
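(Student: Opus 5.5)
The plan is to describe $\mathcal V_\lambda$ without reference to the order in which Algorithm~\ref{alg:graph-multivector-field} performs its merges, and then compare two thresholds directly. For a threshold $\lambda$, let $R_\lambda$ be the set of \emph{active vertex--edge pairs}: $\{v_i,(v_i,v_j)\}$ whenever $w_\lambda(v_i,v_j)=1$, and $\{v_j,(v_i,v_j)\}$ whenever $w_\lambda(v_j,v_i)=1$, for $(v_i,v_j)\in E$. Consider the auxiliary graph $H_\lambda$ on $X=V\cup E$ whose edges are exactly these pairs.

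\textbf{Step 1.} I will show that the blocks of $\mathcal V_\lambda$ are precisely the connected components of $H_\lambda$. Step~3 only unites elements joined by an edge of $H_\lambda$. Step~4 only unites blocks that intersect, and it is iterated until no overlaps remain, which terminates by Proposition~\ref{prop:termination-lc-final}. Hence every block lies inside a component of $H_\lambda$. Conversely, every edge of $H_\lambda$ has both endpoints in a common block, so blocks are unions of components. The two descriptions therefore coincide.

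\textbf{Step 2 (part 1).} Let $\lambda_1<\lambda_2$. If $W(i,j)>\lambda_2$ then $W(i,j)>\lambda_1$, so $w_{\lambda_2}\le w_{\lambda_1}$ entrywise. Thus $R_{\lambda_2}\subseteq R_{\lambda_1}$, and $H_{\lambda_2}$ is a spanning subgraph of $H_{\lambda_1}$ on the same vertex set $X$. Each component of $H_{\lambda_2}$ is connected in $H_{\lambda_1}$, so it lies in a single component of $H_{\lambda_1}$. By Step~1, every block of $\mathcal V_{\lambda_2}$ is contained in a block of $\mathcal V_{\lambda_1}$. This is the coarsening relation $\mathcal V_{\lambda_2}\subseteq\mathcal V_{\lambda_1}$ in the paper's notation, i.e.\ $\mathcal V_{\lambda_1}\succeq\mathcal V_{\lambda_2}$. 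A merge present at $\lambda_2$ is an edge of $H_{\lambda_2}$, hence is present at $\lambda_1$, so merges are never undone.

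\textbf{Step 3 (part 2).} Apply Proposition~\ref{prop:morse-coarsening} with $\mathcal V_{\lambda_2}\subseteq\mathcal V_{\lambda_1}$: every Morse set (SCC) at $\lambda_2$ is contained in a Morse set at $\lambda_1$, and the number of Morse sets cannot increase. Adjacencies transfer through the projection $\pi:\mathcal V_{\lambda_2}\to\mathcal V_{\lambda_1}$ sending a block to the block containing it. Suppose $W'\cap\Pi(V')\neq\emptyset$ at level $\lambda_2$. Since $V'\subseteq\pi(V')$, we have $\operatorname{cl}(V')\subseteq\operatorname{cl}(\pi(V'))$ by monotonicity of closure, and $W'\subseteq\pi(W')$. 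Hence $\pi(W')\cap\Pi(\pi(V'))\neq\emptyset$. So every $M$-graph edge at $\lambda_2$ maps to an edge, or a self-loop, at $\lambda_1$. Directed cycles therefore map to closed walks, each SCC maps into a single SCC, and no new disconnected components appear.

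\textbf{Main obstacle.} The delicate point is Step~1. The algorithm as written performs pairwise replacements that assume singletons, so its output must be identified with the components of $H_\lambda$ regardless of processing order. I would prove this by induction on merge operations, using the invariant that every current block is a union of elements connected in $H_\lambda$. Once this order-independence is established, the rest is monotonicity of thresholds plus Proposition~\ref{prop:morse-coarsening}.
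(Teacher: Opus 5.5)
Your proposal is correct and follows the same route as the paper: the entrywise inequality $w_{\lambda_2}\le w_{\lambda_1}$ forces every merge made at $\lambda_2$ to persist at $\lambda_1$, and then $M$-graph adjacencies persist, so strongly connected components can only merge. Your two additions, describing $\mathcal V_\lambda$ as the connected components of $H_\lambda$ and introducing the projection $\pi\colon\mathcal V_{\lambda_2}\to\mathcal V_{\lambda_1}$, make rigorous two points the paper passes over: the output of Algorithm~\ref{alg:graph-multivector-field} does not depend on merge order, and the two $M$-graphs live on different vertex sets, so ``edge inclusion'' must be read through $\pi$, with edges possibly collapsing to self-loops.
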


\begin{proof}
(\emph{1. Monotonicity of merges.})  
By definition \(w_\lambda(i,j)=1\) if and only if \(W(i,j)>\lambda\).
Hence if \(\lambda_1<\lambda_2\), then
\(w_{\lambda_1}(i,j)\ge w_{\lambda_2}(i,j)\) for all \(i,j\).
That is, every active relation at level \(\lambda_2\) remains active at
level \(\lambda_1\), while additional pairs may become active.
Since merges in Algorithm~\ref{alg:graph-multivector-field} are triggered
whenever a relation is active, all merges performed at \(\lambda_2\)
also occur at \(\lambda_1\).
Thus the partition at the smaller threshold \(\lambda_1\) is obtained
by further merging blocks of \(\mathcal V_{\lambda_2}\), and we have
\(\mathcal V_{\lambda_1} \succeq \mathcal V_{\lambda_2}\).

(\emph{2. Monotonicity of $M$--graphs and Morse sets.})  
The $M$--graph \(G_{\mathcal V_\lambda}\) encodes adjacency between
multivectors determined by mouth and closure relations.
Because no adjacency relation is removed when $\lambda$ decreases—
only new ones may be added—the edge set of
\(G_{\mathcal V_{\lambda_2}}\) is contained in that of
\(G_{\mathcal V_{\lambda_1}}\) for \(\lambda_1<\lambda_2\).
Consequently, the strongly connected components (SCCs) of
\(G_{\mathcal V_{\lambda_1}}\) are unions of SCCs of
\(G_{\mathcal V_{\lambda_2}}\); each coarse SCC at the smaller threshold
contains one or more fine SCCs from the larger threshold.
Therefore, the number of Morse sets cannot increase as \(\lambda\)
decreases, and every new Morse set at lower thresholds is formed by
merging earlier ones.
\end{proof}

\begin{remark}
\label{prop:critical-lc}
The multivector field \(\mathcal V_\lambda\)  
can change only when \(\lambda\) crosses a value in the finite set
\(\Lambda^*=\{W(i,j):i\ne j\}\). Hence there are finitely many distinct multivector fields in the filtration.
\end{remark}

\begin{proposition}[Homology above dimension one vanishes]
\label{prop:homology-lc}
For every Morse set \(M\) obtained from a graph multivector field,
\[
CH_k(M)=H_k(\operatorname{cl}(M),\operatorname{mo}(M))=0\quad\text{for }k>1.
\]
\end{proposition}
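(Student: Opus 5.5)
The idea is to reduce the statement to a dimension count. The pair \((\operatorname{cl}(M),\operatorname{mo}(M))\) lives inside the graph poset \(X_G=V\cup E\), which has height one: vertices sit strictly below edges, and there are no other strict relations. We will interpret homology of a pair of closed subsets of \(X_G\) as simplicial homology of the corresponding subcomplexes of the graph, viewed as a one-dimensional simplicial complex. This is legitimate because, by McCord's theorem, the finite \(T_0\) space of a face poset is weakly equivalent to its order complex, which here is the barycentric subdivision of the graph. Either way, the ambient complex has no cells of dimension \(\geq 2\).

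\textbf{Step 1: \(\operatorname{cl}(M)\) is a subcomplex.} Write \(M\) as the union of the multivectors in the strongly connected component. Its closure \(\operatorname{cl}(M)\) is a down-set in \(X_G\), so if it contains an edge it also contains both endpoints. Hence \(\operatorname{cl}(M)\) is a subgraph \(K\), i.e.\ a subcomplex of dimension at most one.

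\textbf{Step 2: \(\operatorname{mo}(M)\) is a subcomplex.} The argument in Proposition~\ref{prop:connected-implies-locallyclosed} shows that \(\operatorname{mo}(M)\) consists only of vertices: the endpoints of edges of \(M\) that do not themselves lie in \(M\). It does not use connectedness of \(M\), so it applies to an arbitrary union of multivectors. A set of vertices is closed, so \(L=\operatorname{mo}(M)\) is a zero-dimensional subcomplex of \(K\). In particular \(M\) is locally closed and the pair \((K,L)\) is a genuine pair of closed sets.

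\textbf{Step 3: vanishing.} Compute \(H_k(K,L)\) with the relative simplicial (cellular) chain complex \(C_k(K)/C_k(L)\). For \(k\ge 2\) this group is zero, since \(K\) has no simplices of dimension \(\geq 2\), and therefore \(H_k(K,L)=0\) for \(k>1\).

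A more invariant route is the long exact sequence of the pair:
\[
H_k(K)\to H_k(K,L)\to H_{k-1}(L).
\]
For \(k\ge 2\) the left term vanishes because \(K\) is a graph, and the right term vanishes because \(L\) is discrete. The same argument applies verbatim to each of the finitely many Morse sets and to any coefficient ring.

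The main obstacle is not the computation but justifying the identification in the first paragraph. The paper defines \(CH_k\) as homology of the finite topological pair, so I would explicitly invoke McCord's weak equivalence \(|\Delta(X_G)|\to X_G\), which is natural for subspaces that are down-sets. I would then note that \(\Delta(\operatorname{cl}M)\) is a one-dimensional complex and \(\Delta(\operatorname{mo}M)\) is a discrete set. If that citation is unwanted, I would instead state as a convention that the homology of the graph poset is the simplicial homology of the graph, as is standard in combinatorial Conley theory (cf.~\cite{lipinski2019conley}). With that convention, Step 3 is immediate.
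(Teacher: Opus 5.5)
Your proof is correct and uses the same dimension-count argument as the paper, which simply notes that $\operatorname{cl}(M)$ lies in the one-dimensional cell complex of the graph, so relative homology vanishes above degree one. What you add is justification the paper leaves implicit: that $\operatorname{mo}(M)$ is a closed set of vertices, so $(\operatorname{cl}(M),\operatorname{mo}(M))$ is a genuine subcomplex pair, and the appeal to McCord's theorem to identify the homology of the finite space with the simplicial homology of the graph.
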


\begin{proof}
\(\operatorname{cl}(M)\) is contained in the one-dimensional cell complex
given by the underlying graph; relative homology for graphs vanishes in
dimensions \(>1\), hence the Conley index is trivial above dimension one.
\end{proof}

\begin{remark}
\label{rem:complexity-lc}
Computing active relations \(W(i,j)>\lambda\) requires \(O(n^2)\)
comparisons. The transitive closure / union step can be implemented using
a union–find (disjoint-set) structure over the universe \(X\), giving near
linear performance in the number of successful merges.
\end{remark}

\medskip
The propositions above establish that the construction of
graph multivector fields yields a finite, nested, and well-behaved
filtration indexed by the threshold parameter~$\lambda$.
Each level of the filtration defines a locally closed partition of the
combinatorial space \( X = V \cup E \) whose induced
$M$--graph captures the dynamic connectivity structure of the system.
To illustrate these concepts concretely, we now present a simple
example demonstrating the full construction process:
from the relation matrix and its thresholding,
to the resulting multivector field at a fixed threshold.

\begin{example}[Construction at a fixed threshold]
\label{ex:construction}
Consider a directed graph \(G=(V,E)\) with vertex set
\(V=\{v_1,v_2,v_3\}\) and relation matrix
\[
W =
\begin{pmatrix}
0 & 0.8 & 0.3 \\
0.5 & 0 & 0.7 \\
0.2 & 0.6 & 0
\end{pmatrix}.
\]
Each entry \(W(i,j)\) measures the strength of relation from \(v_i\) to \(v_j\).
The edge set of \(G\) is
\[
E = \{(v_1,v_2), (v_1,v_3), (v_2,v_3)\},
\]
so the combinatorial space is
\[
X = V \cup E = \{v_1, v_2, v_3, (v_1,v_2), (v_1,v_3), (v_2,v_3)\}.
\]

\medskip\noindent
\textbf{Step 1. Thresholding.}
Fix the threshold \(\lambda = 0.6\).
The binary thresholded relation matrix is then
\[
w_\lambda =
\begin{pmatrix}
0 & 1 & 0 \\
0 & 0 & 1 \\
0 & 0 & 0
\end{pmatrix},
\]
meaning that \(w_\lambda(v_i,v_j)=1\) when \(W(i,j)>\lambda\).

\medskip\noindent
\textbf{Step 2. Initialization.}
We begin with singleton multivectors for every vertex and every edge:
\[
\mathcal V_\lambda^{(0)} =
\{\{v_1\}, \{v_2\}, \{v_3\},
  \{(v_1,v_2)\}, \{(v_1,v_3)\}, \{(v_2,v_3)\}\}.
\]

\medskip\noindent
\textbf{Step 3. Merging according to active relations.}
From the nonzero entries of \(w_\lambda\):
\begin{itemize}
  \item \(w_\lambda(v_1,v_2)=1\): merge \(v_1\) and \((v_1,v_2)\);
  \item \(w_\lambda(v_2,v_3)=1\): merge \(v_2\) and \((v_2,v_3)\).
\end{itemize}
After these merges we have
\[
\mathcal V_\lambda^{(1)} =
\{\{v_1,(v_1,v_2)\}, \{v_2,(v_2,v_3)\}, \{v_3\}, \{(v_1,v_3)\}\}.
\]

\medskip\noindent
\textbf{Step 4. Transitive merging.}
No blocks share common elements, so no further merges occur.
Thus the final multivector field at threshold \(\lambda=0.6\) is
\[
\mathcal V_\lambda = 
\{A_1=\{v_1,(v_1,v_2)\}, A_2=\{v_2,(v_2,v_3)\}, A_3=\{v_3\}, A_4=\{(v_1,v_3)\}\}.
\]

\medskip\noindent
\textbf{Step 5. Interpretation.}
Each block of \(\mathcal V_\lambda\) is connected in the incidence graph
and hence locally closed.
The $M$--graph corresponding to this multivector field has four nodes,
one per block, with directed edges induced by adjacency of their
constituent vertices and edges in \(G\).
As \(\lambda\) decreases, more entries of \(w_\lambda\) become active,
leading to additional merges and a coarser multivector field,
illustrating the nested filtration
\(\mathcal V_{\lambda_1} \succeq \mathcal V_{\lambda_2}\)
for \(\lambda_1<\lambda_2\).

\paragraph{Step 6: Morse decomposition and Conley indices.}

From the locally closed multivector field obtained at threshold
$\lambda=0.6$,
\[
A_1=\{v_1,e_{12}\}, \qquad
A_2=\{v_2,e_{23}\}, \qquad
A_3=\{v_3\}, \qquad
A_4=\{e_{13}\},
\]
we compute the mouths using the incidence (face) relation:
\[
\begin{aligned}
\operatorname{mo}(A_1)&=\{v_2\}, &
\operatorname{mo}(A_2)&=\{v_3\}, \\
\operatorname{mo}(A_3)&=\varnothing, &
\operatorname{mo}(A_4)&=\{v_1,v_3\}.
\end{aligned}
\]
The Conley-Morse--graph is then constructed according to the rule
\[
A_i \longrightarrow A_j
\quad\Longleftrightarrow\quad
\operatorname{mo}(A_i)\cap A_j\neq\varnothing.
\]
This yields the directed edges
\[
A_1\to A_2, \qquad
A_2\to A_3, \qquad
A_4\to A_1, \qquad
A_4\to A_3.
\]
No other intersections occur.  The resulting Conley-Morse--graph is shown in
Figure~\ref{fig:Mgraph-example}.

\medskip\noindent
\textbf{Strongly connected components.}
A strongly connected component (SCC) is a maximal subset of nodes
such that each pair of nodes is mutually reachable by directed paths.
Applying Tarjan’s algorithm to the above $M$--graph yields four singleton
SCCs:
\[
\mathcal{M}_1=\{A_1\},\qquad
\mathcal{M}_2=\{A_2\},\qquad
\mathcal{M}_3=\{A_3\},\qquad
\mathcal{M}_4=\{A_4\}.
\]
Hence the Morse decomposition at $\lambda=0.6$ consists of the four
individual blocks of the multivector field.

\medskip\noindent
\textbf{Conley index dimensions.}
For each Morse set $M$, the Conley index is computed as
$CH_k(M)=H_k(\operatorname{cl}(M),\operatorname{mo}(M))$.
Since all closures lie in the one--dimensional graph complex,
homology above dimension~1 vanishes.
A direct computation gives
\[
\begin{array}{c|cc}
\text{Morse set } M & \dim CH_0(M) & \dim CH_1(M) \\
\hline
A_1 & 0 & 0\\
A_2 & 0 & 0\\
A_3 & 1 & 0\\
A_4 & 0 & 1
\end{array}
\]
Thus $A_3$ corresponds to an isolated fixed vertex
(nontrivial zero-dimensional index), while $A_4$ contributes a
nontrivial one-dimensional Conley index, reflecting the presence of a
directed loop through the edge $(v_1,v_3)$.
The sets $A_1$ and $A_2$ have trivial indices, indicating transient
connections in the dynamics.

\begin{figure}[ht]
\centering
\begin{tikzpicture}[>=stealth, node distance=3.0cm, every node/.style={font=\small}]
  \node[draw, circle, thick] (A1) at (0,1.5) {$A_1(0,0)$};
  \node[draw, circle, thick] (A2) at (0,-1.5) {$A_2(0,0)$};
  \node[draw, circle, thick] (A4) at (3,0) {$A_4(0,1)$};
  \node[draw, circle, thick] (A3) at (6,0) {$A_3(1,0)$};

  \draw[->, thick] (A1) to[bend right=10] (A2); 
  \draw[->, thick] (A2) to[bend right=10] (A3);

  \draw[->, thick] (A4) to[bend right=15] (A1);
  \draw[->, thick] (A4) to[bend left=10] (A2); 
    

\end{tikzpicture}
\caption{Conley-Morse-graph for $\lambda=0.6$.
Edges are drawn whenever $\operatorname{mo}(A_i)\cap A_j\neq\varnothing$.
Each node is annotated with the pair
$(\dim CH_0,\dim CH_1)$}
\label{fig:Mgraph-example}
\end{figure}
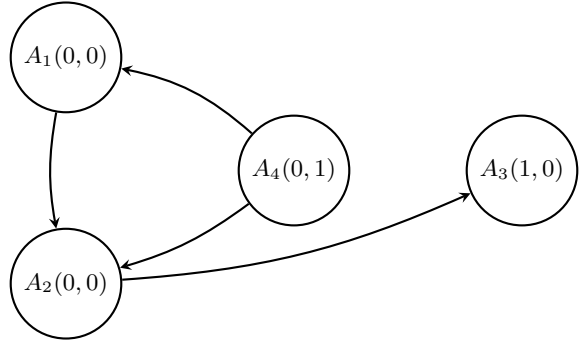

\end{example}
 
Up to this point, we have established how a graph multivector field
captures the combinatorial structure of a system at a fixed threshold
and how its Morse decomposition and Conley indices describe the
underlying dynamical organization in terms of invariant components.
However, a single threshold provides only a static snapshot.
As the threshold $\lambda$ varies, the multivector field evolves:
Morse sets may appear, merge, or disappear, and their Conley indices
may change accordingly.
To understand the system in its entirety, it is therefore essential
to study how these Morse decompositions transform across the filtration
$\{\,\mathcal{V}_\lambda\,\}_\lambda$.
This leads naturally to the notion of \emph{persistence}, where we follow
the evolution of the invariant sets and their indices across scales,
obtaining a global description of the system’s multi-level organization.

\section{Morse decomposition, Conley indices and persistent structures}
\label{sec:morse-persistence}

The construction of graph multivector fields provides a discrete,
combinatorial framework for describing the internal organization of a
relation matrix or weighted graph.
At each threshold $\lambda$, the algorithm yields a partition
$\mathcal{V}_\lambda$ of the space $X=V\cup E$ into connected,
locally closed blocks whose interactions define the $M$--graph.
The strongly connected components of this $M$--graph constitute the
\emph{Morse decomposition} of the system at level $\lambda$, and their
Conley indices capture the algebraic structure of the corresponding
invariant sets.
The Morse decomposition organizes the structure of the $M$-graph into finitely many
invariant components, called Morse sets. Each Morse set corresponds to a maximal
substructure of the $M$-graph in which recurrence occurs. The admissible transitions
between these sets induce a partial order, reflecting how invariant components
are connected through the directed structure of the $M$-graph. In this way,
the Morse decomposition provides a finite combinatorial summary of the global
organization encoded by the graph multivector field.

While each fixed threshold offers a consistent dynamical description,
it represents only one scale of observation.  
As $\lambda$ varies, the relation matrix $W_\lambda$ changes
monotonically: edges are progressively removed as the threshold increases,
and consequently, multivectors may merge or vanish.  
This induces a nested family of graph multivector fields
\[
  \mathcal{V}_{\lambda_1} \succeq \mathcal{V}_{\lambda_2}
  \quad \text{for } \lambda_1 < \lambda_2,
\]
and therefore a corresponding filtration of $M$--graphs and Morse
decompositions.  
Studying how the Morse sets and their Conley indices evolve along this
filtration provides a way to understand the global organization of the
system across scales.

This section introduces the notion of \emph{persistence} in the context of
graph multivector fields.  
We interpret persistence as the continuous tracking of Morse sets,
together with their Conley indices, throughout the filtration
$\{\mathcal{V}_\lambda\}_\lambda$.  
This approach allows us to identify stable invariant structures that
persist across a range of thresholds, as well as transient or unstable
features that appear only within narrow intervals.  
In this way, persistence provides a unified perspective on the evolution of
combinatorial dynamics and their topological signatures, extending the
classical ideas of persistent homology to the setting of multivector
fields and Morse decompositions.

 \subsection{Morse decomposition across the filtration}
\label{sec:morse-filtration}

For each threshold $\lambda$, the graph multivector field
$\mathcal V_\lambda$ induces an $M$--graph
$G_{\mathcal V_\lambda}$ whose strongly connected components (SCCs)
constitute the \emph{Morse decomposition} of the combinatorial dynamics
at that scale.  
We denote these Morse sets by
\[
\mathcal M(\lambda)
  = \{ M_1(\lambda), \dots, M_{r(\lambda)}(\lambda) \},
\]
where each $M_i(\lambda)$ is a strongly connected component of
$G_{\mathcal V_\lambda}$.  
Each Morse set represents a recurrent region of the system, that is, a
subset of vertices and edges that can mutually reach one another through
the relations encoded by $\mathcal V_\lambda$.

\medskip
\noindent\textbf{Evolution along the filtration.}
Under the convention
\[
w_\lambda(i,j)=\mathbf{1}_{\{W(i,j)>\lambda\}},
\]
lowering the threshold $\lambda$ activates additional relations.
Hence, for $\lambda_1 < \lambda_2$, the boolean matrices satisfy
$w_{\lambda_1} \ge w_{\lambda_2}$ entrywise, so more merges can occur at
$\lambda_1$.  
The corresponding multivector fields satisfy the monotonicity relation
\[
\mathcal V_{\lambda_1} \succeq \mathcal V_{\lambda_2}
\qquad\text{for all } \lambda_1<\lambda_2,
\]
meaning that the partition of $X=V\cup E$ becomes \emph{coarser} as the
threshold decreases.  
Consequently, the Morse decomposition can only coarsen: existing Morse
sets may merge into larger ones as $\lambda$ decreases, but no new Morse
sets can be created.

To describe their evolution, we scan the filtration from large to small
thresholds (from fine to coarse resolution).  
A Morse set $M$ is said to be \emph{born} at the first (largest)
threshold $\lambda_b$ where it appears as a strongly connected component
in $G_{\mathcal V_{\lambda_b}}$, and it \emph{dies} at the first
(smaller) threshold $\lambda_d < \lambda_b$ where it merges into a
larger component.  
The persistence (lifetime) of $M$ is measured by the interval
$(\lambda_d,\lambda_b]$, or equivalently by its length
$\lambda_b - \lambda_d$.  
If $M$ persists until the minimal threshold of the filtration, we set
$\lambda_d$ equal to that minimal value. Formal definitions of birth and death in our setting is given here.
\begin{definition}\label{birth-death}
    Let $\{\mathcal{M}_\lambda\}_{\lambda \in \Lambda}$ be a parameterized family
of Morse decomposition of our $M$-graphs indexed by a totally ordered parameter set $\Lambda$.
A \emph{birth} occurs at parameter value $\lambda$ if a new Morse set
appears in the Morse decomposition of $\mathcal{M}_\lambda$ that was not
present for any smaller parameter value.
A \emph{death} occurs at $\lambda$ if a Morse set present for smaller
parameter values ceases to exist as an isolated invariant component,
for example due to merging with another component or loss of isolation.
Birth and death therefore correspond to structural changes in the
decomposition of the $M$-graph into invariant components.
\end{definition}
\begin{remark}
    When we said in Definition \ref{birth-death} that a new Morse set appears, in general we just mean that there is a merging between two  or many different Morse sets, with a change of the structure of the resulting Morse set ( for example a change in the Conley index of the resulting Morse set).
\end{remark}

\begin{definition}[Morse persistence diagram]
The \emph{Morse persistence diagram} of a graph $G$ is the multiset of
points $(\lambda_b,\lambda_d )$
corresponding to all Morse sets appearing across the filtration
$\{\,G_{\mathcal V_\lambda}\,\}_\lambda$.  
Each point encodes the birth and death thresholds of one recurrent
structure and quantifies its persistence across scales.
\end{definition}

In practice, the diagram can be obtained by tracking how strongly
connected components of the $M$--graph merge as $\lambda$ decreases.
Points far from the diagonal (long-lived features) correspond to robust
recurrent structures, whereas points near the diagonal represent noisy interactions.

\medskip
\noindent
\textbf{Conley indices along the filtration.}
For each Morse set $M(\lambda)$ we associate its Conley index
\[
CH_k(M(\lambda)) =
H_k(\operatorname{cl}(M(\lambda)), \operatorname{mo}(M(\lambda))).
\]
The rank of $CH_k(M(\lambda))$ encodes the local topological structure
of the recurrent region.  
For graphs, only degrees $k=0$ and $k=1$ can be nontrivial: the
degree--$0$ index detects connected components, while the degree--$1$
index captures cycles or feedback loops within the Morse set.  
Tracking these indices along the filtration enriches the Morse
persistence diagram with topological information.

\begin{definition}[Conley--enriched Morse persistence diagram]
For each $k\in\{0,1\}$, the \emph{Conley--enriched Morse persistence
diagram} is the multiset of tuples
\[
(\lambda_b, \lambda_d, k, \beta_k),
\]
where $(\lambda_b,\lambda_d)$ records the birth and death thresholds of
a Morse set, and $\beta_k=\operatorname{rank}\,CH_k(M(\lambda_b))$ is
the $k$--th Betti number of its Conley index at birth.
\end{definition}

\begin{proposition}[Finiteness of the Morse persistence diagram]
\label{prop:finite-diagram}
Let $\{\mathcal V_\lambda\}$ be the filtration of graph multivector
fields obtained from a finite relation matrix $W$.  Then the Morse
persistence diagram of $G$ contains finitely many points.  
Every birth or death occurs at one of the finitely many critical
thresholds
\[
\Lambda^*=\{\,W(i,j):\,i\ne j\,\}.
\]
\end{proposition}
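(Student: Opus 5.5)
The argument reduces everything to the fact that $\lambda\mapsto\mathcal V_\lambda$ is piecewise constant with jumps only at $\Lambda^*$. First I would establish this. Each relation is active at $\lambda$ exactly when $W(i,j)>\lambda$, so the binary matrix $w_\lambda$ is constant on each connected component of $\mathbb{R}\setminus\Lambda^*$. The diagonal entries are never used by Algorithm~\ref{alg:graph-multivector-field}, since Step 1 ranges over $i\neq j$; this is why only off-diagonal values enter $\Lambda^*$.

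On such a component, Algorithm~\ref{alg:graph-multivector-field} receives identical input, so it returns the same partition $\mathcal V_\lambda$. By Proposition~\ref{prop:termination-lc-final} this output is well defined. The $M$--graph $G_{\mathcal V_\lambda}$ is determined by $\mathcal V_\lambda$ and the fixed poset $X=V\cup E$, so it is also constant there. Consequently the Morse decomposition $\mathcal M(\lambda)$, which is its set of SCCs, and the Conley indices $H_k(\operatorname{cl}M,\operatorname{mo}M)$ are constant on each such interval. This is the content of Remark~\ref{prop:critical-lc}, and I would make it explicit at this point.

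Next, I would show that births and deaths can only occur at points of $\Lambda^*$. By Definition~\ref{birth-death}, a birth or death at $\lambda$ is a structural change: a Morse set present on one side of $\lambda$ is absent, or fails to be isolated, on the other side. On each open interval of $\mathbb{R}\setminus\Lambda^*$ nothing changes, so any change must sit at an endpoint. The convention $W>\lambda$ makes $\lambda\mapsto w_\lambda$ right-continuous in the appropriate sense, since $w_\lambda=w_{\lambda'}$ for $\lambda'$ slightly above $\lambda$. Hence every change is registered exactly at some $W(i,j)$, and $\lambda_b,\lambda_d\in\Lambda^*$. The one exception is the conventional death value assigned to sets that survive to the end, which is the minimal threshold of the filtration; I would take that to be $\min\Lambda^*$ so that the claim holds as stated.

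Finiteness then follows by counting. $\Lambda^*$ has at most $n(n-1)$ elements, so there are at most $n(n-1)+1$ distinct fields $\mathcal V_\lambda$. Each $\mathcal V_\lambda$ has at most $|X|=n+n(n-1)/2$ blocks, so each level has finitely many Morse sets. Every diagram point corresponds to a Morse set appearing at some level, so the number of points is at most $(|\Lambda^*|+1)\,|X|$. Alternatively, one can use Proposition~\ref{prop:filtration-monotonic}: as $\lambda$ decreases the SCCs only merge, so there are at most $|X|-1$ merge events. Each merge event produces at most one new point and finishes finitely many others.

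The main obstacle I expect is not the counting but the bookkeeping. Definition~\ref{birth-death} and the following remark allow births to be read as merges that change the Conley index. I must check that the diagram, as a multiset, assigns only finitely many points to a single critical value, with no double counting of one Morse set across the equal-value intervals. I would handle this by indexing points by pairs (critical value, SCC at that level). Since both index sets are finite, multiplicity is bounded.
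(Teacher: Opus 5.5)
Your proposal is correct and follows the paper's route: $w_\lambda$, and hence $\mathcal V_\lambda$, its $M$--graph and its strongly connected components, are constant between consecutive points of $\Lambda^*$, so there are finitely many distinct levels with finitely many Morse sets each, and every birth or death sits at a value of $\Lambda^*$. The explicit bound $(|\Lambda^*|+1)\,|X|$, the right-continuity remark and the convention for the terminal death value are bookkeeping the paper leaves implicit, not a different argument (a matching convention, e.g.\ $\lambda_b=\max\Lambda^*$, is also needed for Morse sets already present at the top level $\lambda\ge\max\Lambda^*$).
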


\begin{proof}
The thresholding rule
$w_\lambda(i,j)=\mathbf{1}_{\{W(i,j)>\lambda\}}$
can change only when $\lambda$ crosses one of the finitely many
values $W(i,j)$.
Between two consecutive thresholds
$\lambda^{(t)} < \lambda^{(t+1)}$ of $\Lambda^*$,
the boolean matrix $w_\lambda$ and the resulting multivector field
$\mathcal V_\lambda$ remain constant.  
Hence the family $\{\mathcal V_\lambda\}_\lambda$ takes only finitely
many distinct values.  
Since the $M$--graph associated to each $\mathcal V_\lambda$ is finite,
its collection of strongly connected components—and therefore the set of
Morse sets across all levels—is finite as well.  
Each birth or death event must coincide with a change in
$\mathcal V_\lambda$, so all such events occur at parameters belonging to
$\Lambda^*$.  
Thus the Morse persistence diagram contains finitely many points.
\end{proof}

For each fixed $M$-graph, the Conley index is an invariant of its Morse sets. As the parameter varies and the structure of the $M$-graph
changes, Morse sets may merge, split, or cease to be isolated.
In such cases, the associated Conley indices may change, reflecting a genuine
modification of the invariant structure encoded by the graph multivector field.
Thus, variations of the Conley index across parameters record structural
changes in the combinatorial dynamics.

Persistence is defined at the level of Morse sets of the parameterized
family of $M$-graphs. Each Morse set represents a maximal invariant
component in the directed structure. When defined, the Conley index
of a Morse set provides an additional invariant attached to that component.
Thus, persistence tracks the appearance, disappearance, and evolution
of Morse sets across parameters, together with the associated Conley
indices when these are defined.

\medskip
The Morse persistence diagram offers a compact, interpretable summary of
how recurrent structures evolve through the filtration.

\subsection{Stability of Morse persistence diagrams}
\label{sec:stability} 

The purpose of this section is to establish that the Morse persistence
diagrams obtained from the graph multivector field construction are
\emph{stable} under small perturbations of the input relation matrix.
Stability, originally proved for classical persistence modules by
Cohen--Steiner, Edelsbrunner, and Harer~\cite{cohen2005stability}, and
later generalized by Chazal, De~Silva, Glisse, and
Oudot~\cite{chazal2016structure}, ensures that small perturbations in the
data result in small changes in the persistence diagram.
We adapt these arguments to the combinatorial dynamics generated by
graph multivector fields, relying on the finite topological framework of
Mrozek~\cite{mrozek2017conley} and the homological setting described in
\cite{kaczynski2004computational}.
 
In the previous subsection, we defined the Morse persistence diagram as
a record of how the recurrent structures (Morse sets) of the graph evolve
through the filtration induced by the thresholds $\lambda$.
We now ask: if the relation matrix $W$ describing the system is slightly
perturbed—for example due to noise or uncertainty—how does the diagram
change?
Intuitively, we expect the diagram to deform continuously, with all birth
and death coordinates shifting by at most the perturbation amplitude.
The goal of this section is to formalize this expectation.

\subsubsection{Preliminaries and metrics}

We begin by introducing the metrics used to compare relation matrices and
persistence diagrams.

\begin{definition}[Sup--norm between relation matrices]
Given two relation matrices $W=(W(i,j))$ and $W'=(W'(i,j))$ on the same
finite set $X_G$, their \emph{sup--norm distance} is defined by
\[
\|W-W'\|_\infty = \sup_{i,j\in X_G} |W(i,j)-W'(i,j)|.
\]
This norm measures the largest possible change in relation strength
between any two graph elements and serves as the perturbation amplitude.
It is the standard choice in persistence stability analysis
\cite{cohen2005stability,chazal2016structure}.
\end{definition}

\begin{definition}[Bottleneck distance]
Let $D_1$ and $D_2$ be two persistence diagrams, i.e.\ multisets of
points $(b,d)\in\{(x,y)\in\mathbb R^2:x<y\}\cup\{(x,\infty)\}$.
The \emph{bottleneck distance} between $D_1$ and $D_2$ is
\[
d_B(D_1,D_2)
  = \inf_{\eta}
     \sup_{x\in D_1}
     \|x-\eta(x)\|_\infty,
\]
where $\eta$ ranges over all bijections between $D_1$ and
$D_2\cup\Delta$, and $\Delta=\{(t,t)\}$ denotes the diagonal.
This metric measures the largest necessary shift to match births and
deaths between the two diagrams.
\end{definition}

\subsubsection{Perturbation of the filtration}

We now analyze how small perturbations in the matrix $W$ affect the
filtration $\{\mathcal V_\lambda(W)\}_\lambda$ generated by the
multivector field algorithm (Algorithm~\ref{alg:graph-multivector-field}).
Recall that the algorithm depends only on the boolean thresholded matrix
\[
w_\lambda(i,j) = \mathbf{1}_{\{W(i,j) > \lambda\}},
\]
so changes in $W$ of size at most $\varepsilon$ can only alter the
threshold activation of relations within $\varepsilon$ of the boundary.\begin{lemma}[Filtration stability]
\label{lem:filtration_stability}
Let $W$ and $W'$ be two relation matrices on the same finite set
with $\|W - W'\|_\infty \le \varepsilon$.  
Then, for every threshold $\lambda \in [0,1]$, the corresponding
multivector field partitions satisfy
\[
\mathcal V_{\lambda+\varepsilon}(W)
  \preceq
\mathcal V_{\lambda}(W')
  \preceq
\mathcal V_{\lambda-\varepsilon}(W),
\]
where $\mathcal A \preceq \mathcal B$ means that
$\mathcal A$ is a \emph{refinement} of $\mathcal B$
(every block of $\mathcal A$ is contained in some block of $\mathcal B$).
Equivalently, as $\lambda$ decreases, the partition becomes coarser.
\end{lemma}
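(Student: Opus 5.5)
The argument reduces the claim to an entrywise comparison of the thresholded boolean matrices, followed by a monotonicity statement for Algorithm~\ref{alg:graph-multivector-field}: its output depends only on the set of active relations and is monotone in that set.

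\textbf{Step 1: comparing the boolean matrices.} Fix $\lambda$ and indices $i\neq j$. If $W(i,j)>\lambda+\varepsilon$, then $W'(i,j)\ge W(i,j)-\varepsilon>\lambda$. Likewise, if $W'(i,j)>\lambda$, then $W(i,j)\ge W'(i,j)-\varepsilon>\lambda-\varepsilon$. Hence, entrywise,
\[
w_{\lambda+\varepsilon}(W)\;\le\; w_\lambda(W')\;\le\; w_{\lambda-\varepsilon}(W).
\]
No restriction to $\lambda\in[0,1]$ is needed for this step; it holds for all real $\lambda$.

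\textbf{Step 2: the algorithm is monotone in the active relations.} I would describe the output of Algorithm~\ref{alg:graph-multivector-field} intrinsically. Let $R$ be the set of active relations, i.e.\ the pairs $(i,j)$ with $w(i,j)=1$. Form the auxiliary graph on $X=V\cup E$ with a link $v_i - (v_i,v_j)$ for each active $w(v_i,v_j)$ and a link $v_j - (v_i,v_j)$ for each active $w(v_j,v_i)$. The blocks of $\mathcal V$ are then exactly the connected components of this auxiliary graph.

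The justification is as follows. Step~3 creates the two-element blocks corresponding to these links. Step~4 merges overlapping blocks transitively, which is precisely the passage to connected components. Singleton retention accounts for the isolated elements.

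This description makes the output independent of the order in which merges are performed. It also gives monotonicity: if $R\subseteq R'$, the auxiliary graph for $R$ is a subgraph of the one for $R'$, so every component for $R$ lies inside a component for $R'$. That is exactly the refinement $\mathcal V(R)\preceq\mathcal V(R')$.

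This is the same reasoning as in part~1 of Proposition~\ref{prop:filtration-monotonic}, applied to two arbitrary relation sets instead of two thresholds of the same matrix.

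\textbf{Step 3: combining.} Apply Step~2 to the three relation sets ordered by Step~1. This gives
\[
\mathcal V_{\lambda+\varepsilon}(W)\preceq\mathcal V_\lambda(W')\preceq\mathcal V_{\lambda-\varepsilon}(W).
\]
The closing sentence of the lemma (that the partition coarsens as $\lambda$ decreases) is the special case $W=W'$, which is Proposition~\ref{prop:filtration-monotonic}.

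\textbf{Main obstacle.} The only delicate point is Step~2: identifying the algorithm's output with the connected components of the auxiliary graph. Step~4 as literally written makes a single pass over pairs, so I would read it as ``repeat until no two blocks intersect'', as the termination argument in Proposition~\ref{prop:termination-lc-final} already implicitly does. Under that reading the output is a well-defined partition and monotonicity follows immediately.
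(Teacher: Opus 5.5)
Your proposal is correct and follows the same route as the paper. Both arguments first make the entrywise comparison $w_{\lambda+\varepsilon}(W)\le w_\lambda(W')\le w_{\lambda-\varepsilon}(W)$ and then use the fact that merges triggered by a smaller set of active relations are also triggered by a larger one. Your description of the output as the connected components of the auxiliary link graph, with Step~4 iterated to a fixed point, makes rigorous the monotonicity step that the paper only asserts (``every merge that occurs in $\mathcal V_{\lambda+\varepsilon}(W)$ must also occur in $\mathcal V_{\lambda}(W')$'').
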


\begin{proof}
Recall that the algorithm depends only on the thresholded relation
matrix
\[
w_\lambda(i,j) = \mathbf{1}_{\{W(i,j) > \lambda\}},
\]
and that lowering $\lambda$ activates additional entries (more edges
become ``active''), producing coarser partitions.

Now fix $\lambda$ and assume $\|W - W'\|_\infty \le \varepsilon$.
This means that for every pair $(i,j)$,
\[
|W(i,j) - W'(i,j)| \le \varepsilon.
\]

\textbf{Step 1.}  
Suppose $W(i,j) > \lambda + \varepsilon$.  
Then automatically $W'(i,j) \ge W(i,j) - \varepsilon > \lambda$.  
Hence every pair $(i,j)$ that is ``active'' at level $\lambda + \varepsilon$
for $W$ remains active at level $\lambda$ for $W'$.  
Therefore, every merge that occurs in $\mathcal V_{\lambda+\varepsilon}(W)$
must also occur in $\mathcal V_{\lambda}(W')$.
This implies
\[
\mathcal V_{\lambda+\varepsilon}(W) \preceq \mathcal V_{\lambda}(W').
\]

\textbf{Step 2.}  
Conversely, if $W'(i,j) > \lambda$, then
$W(i,j) \ge W'(i,j) - \varepsilon > \lambda - \varepsilon$,
so all relations active in $\mathcal V_{\lambda}(W')$ are also active in
$\mathcal V_{\lambda - \varepsilon}(W)$.
Thus the merges realized in $\mathcal V_{\lambda}(W')$ also appear in
$\mathcal V_{\lambda - \varepsilon}(W)$, giving
\[
\mathcal V_{\lambda}(W') \preceq \mathcal V_{\lambda - \varepsilon}(W).
\]

Combining the two inclusions, we obtain
\[
\mathcal V_{\lambda+\varepsilon}(W)
  \preceq
\mathcal V_{\lambda}(W')
  \preceq
\mathcal V_{\lambda-\varepsilon}(W).
\]
Since lowering $\lambda$ always coarsens the partition, this result
captures the precise way in which perturbations of $W$ translate into
a small horizontal shift of the entire filtration by at most $\varepsilon$.
\end{proof}

\begin{lemma} 
\label{lem:morse_refinement}
Under the assumptions of Lemma~\ref{lem:filtration_stability}, 
the families of Morse decompositions 
$\{\mathcal M_W(\lambda)\}$ and $\{\mathcal M_{W'}(\lambda)\}$ 
satisfy the inclusion relations
\[
\mathcal M_W(\lambda+\varepsilon)
  \prec
\mathcal M_{W'}(\lambda)
  \prec
\mathcal M_W(\lambda-\varepsilon)
  \qquad\text{for all }\lambda,
\]
where $\mathcal A \prec \mathcal B$ means that 
each Morse set of $\mathcal A$ is contained in a Morse set of $\mathcal B$.
\end{lemma}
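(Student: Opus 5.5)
The plan is to derive the statement directly from Lemma~\ref{lem:filtration_stability} combined with Proposition~\ref{prop:morse-coarsening} (containment of Morse sets under coarsening). By Lemma~\ref{lem:filtration_stability}, for every $\lambda$ we have the two refinement relations
\[
\mathcal V_{\lambda+\varepsilon}(W)\preceq \mathcal V_{\lambda}(W')
\qquad\text{and}\qquad
\mathcal V_{\lambda}(W')\preceq \mathcal V_{\lambda-\varepsilon}(W).
\]
These are exactly the hypothesis $\mathcal V_1\subseteq\mathcal V_2$ of Proposition~\ref{prop:morse-coarsening}, in the sense defined before the Morse decomposition definition: every multivector of the finer field lies in a multivector of the coarser one. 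I will therefore treat each relation separately, with the pairs $(\mathcal V_1,\mathcal V_2)=(\mathcal V_{\lambda+\varepsilon}(W),\mathcal V_\lambda(W'))$ and $(\mathcal V_\lambda(W'),\mathcal V_{\lambda-\varepsilon}(W))$.

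For each pair, Proposition~\ref{prop:morse-coarsening} gives that every Morse set of $\mathcal M(\mathcal V_1)$ is contained in some Morse set of $\mathcal M(\mathcal V_2)$. Writing $\mathcal M_W(\mu)=\mathcal M(\mathcal V_\mu(W))$, this reads $\mathcal M_W(\lambda+\varepsilon)\prec\mathcal M_{W'}(\lambda)$ and $\mathcal M_{W'}(\lambda)\prec\mathcal M_W(\lambda-\varepsilon)$. Since $\prec$ is clearly transitive, the chain holds for all $\lambda$. Lemma~\ref{lem:filtration_stability} is stated only for $\lambda\in[0,1]$, but its proof uses nothing about this range. So I will note that the two threshold comparisons $W(i,j)>\lambda+\varepsilon\Rightarrow W'(i,j)>\lambda$ and $W'(i,j)>\lambda\Rightarrow W(i,j)>\lambda-\varepsilon$ hold for arbitrary real $\lambda$, which gives the statement ``for all $\lambda$''.

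The main obstacle is making the step inside Proposition~\ref{prop:morse-coarsening} precise, because a Morse set is a set of multivectors, that is, an SCC of the $M$--graph. Containment must be read at the level of the underlying elements of $X=V\cup E$, namely as $\bigcup M_1\subseteq\bigcup M_2$. The claimed ``inclusion of $M$--graphs'' should be understood through the quotient map $q:\mathcal V_1\to\mathcal V_2$ sending each block to the block containing it. I will check that $q$ is a graph homomorphism up to loops. If $W_b\cap\Pi_{\mathcal V_1}(V_a)\neq\emptyset$, then, because $\Pi_{\mathcal V_1}(V_a)=V_a\cup\operatorname{cl}(V_a)\subseteq q(V_a)\cup\operatorname{cl}(q(V_a))=\Pi_{\mathcal V_2}(q(V_a))$ by monotonicity of closure, we get $q(W_b)\cap\Pi_{\mathcal V_2}(q(V_a))\neq\emptyset$. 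A directed cycle through the members of an SCC of $G_{\mathcal V_1}$ is then mapped by $q$ to a closed walk in $G_{\mathcal V_2}$, where edges mapped to loops are simply dropped. Hence the image of the SCC lies in a single SCC of $G_{\mathcal V_2}$, and taking unions of elements yields the set-level containment. This reproves the needed form of Proposition~\ref{prop:morse-coarsening} robustly, and the lemma follows.
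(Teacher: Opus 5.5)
Your proposal is correct and follows the paper's route: combine the partition sandwich of Lemma~\ref{lem:filtration_stability} with the fact that coarsening a multivector field can only merge strongly connected components of the $M$--graph. Your quotient-map check, showing that $q$ sends $M$--graph edges to edges or loops, rigorously justifies the step the paper asserts informally (``the $M$--graph gains additional edges, so SCCs can only merge''), and your remark that the restriction $\lambda\in[0,1]$ is inessential is also correct.
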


\begin{proof}
By Lemma~\ref{lem:filtration_stability}, 
the underlying multivector field partitions satisfy
\[
\mathcal V_{\lambda+\varepsilon}(W)
  \preceq
\mathcal V_{\lambda}(W')
  \preceq
\mathcal V_{\lambda-\varepsilon}(W),
\]
that is, the partition for $W'$ at threshold $\lambda$ lies between those
for $W$ at $\lambda+\varepsilon$ and $\lambda-\varepsilon$.

Each Morse decomposition is obtained by taking the strongly connected 
components (SCCs) of the corresponding $M$--graph. 
When a partition becomes coarser (for smaller $\lambda$), 
the $M$--graph gains additional edges, 
so SCCs can only merge and never split.

Hence, a Morse set appearing at threshold $\lambda$ for $W'$ 
must be contained within a Morse set for $W$ at $\lambda-\varepsilon$, 
and it must contain Morse sets for $W$ at $\lambda+\varepsilon$. 
In other words,
\[
\mathcal M_W(\lambda+\varepsilon)
  \prec
\mathcal M_{W'}(\lambda)
  \prec
\mathcal M_W(\lambda-\varepsilon),
\]
as claimed.

Intuitively, this means that a small perturbation of the relation matrix 
shifts the formation and merging of Morse sets by at most $\varepsilon$ 
along the filtration, but does not change their overall structure.
\end{proof}

\subsubsection{Stability of the Morse persistence diagram}
\label{sec:stability-morse}

One of the most fundamental achievements of persistence theory is its
\emph{stability}: small perturbations of the data lead to only small
changes in the resulting persistence diagram.
This property guarantees that persistent features are robust with respect
to noise, numerical errors, or model uncertainty.
In our framework, perturbations arise as small variations of the relation
matrix $W$, which encodes the pairwise strengths of interaction or
transition between elements of the graph.
The following theorem establishes that the Morse persistence diagram
remains stable under such perturbations.

\begin{theorem}[Stability of Morse persistence diagrams]
\label{thm:diagram_stability}
Let $W$ and $W'$ be two relation matrices on the same finite graph $G$
such that $\|W - W'\|_\infty \le \varepsilon$.
Let $D_W$ and $D_{W'}$ denote the Morse persistence diagrams computed from
the filtrations $\{\mathcal V_\lambda(W)\}$ and
$\{\mathcal V_\lambda(W')\}$, respectively.
Then the diagrams satisfy the bound
\[
d_B(D_W, D_{W'}) \le \varepsilon.
\]
\end{theorem}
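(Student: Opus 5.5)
The plan is to reduce the statement to the interleaving of Morse decompositions already established in Lemma~\ref{lem:morse_refinement}, and then run the standard ``interleaving implies bottleneck'' argument in the setting of merge trees of strongly connected components. The structure is close to that of a dendrogram or zero-dimensional persistence. Scanning $\lambda$ from large to small, Morse sets only merge (Proposition~\ref{prop:filtration-monotonic}), so each diagram is the barcode of a merge forest. Here a point $(\lambda_b,\lambda_d)$ records the threshold at which a component is born and the threshold at which it is absorbed into a larger one. Throughout, the \emph{elder rule} will be used to decide which component dies at a merge, for example the one with the smaller birth value, with ties broken by a fixed total order on $X$. Proposition~\ref{prop:finite-diagram} lets me work with finitely many critical values, so all suprema and infima are attained.

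\textbf{Step 1 (encode as a persistence module).} For each $\lambda$, define $F_W(\lambda)$ as the free vector space on the Morse sets $\mathcal M_W(\lambda)$. For $\lambda\ge\mu$, let the map $F_W(\lambda)\to F_W(\mu)$ send a Morse set to the unique Morse set containing it; this map exists by Proposition~\ref{prop:morse-coarsening} and Proposition~\ref{prop:filtration-monotonic}. Functoriality holds because containment is transitive. Since this is a module indexed by $\mathbb R$ with the reversed order, and it has finitely many critical values, it decomposes into interval modules. I will check, by the usual elder-rule argument for merge trees, that its barcode is exactly $D_W$.

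\textbf{Step 2 (build the interleaving).} Lemma~\ref{lem:morse_refinement} supplies set maps $\phi_\lambda:\mathcal M_W(\lambda+\varepsilon)\to\mathcal M_{W'}(\lambda)$ and $\psi_\lambda:\mathcal M_{W'}(\lambda)\to\mathcal M_W(\lambda-\varepsilon)$, each sending a Morse set to the one containing it. Uniqueness of the containing SCC follows because the Morse sets at any fixed level are disjoint. Every map involved is ``take the containing Morse set'', so all the required triangles and squares commute automatically. In particular $\psi\circ\phi$ equals the internal map $F_W(\lambda+\varepsilon)\to F_W(\lambda-\varepsilon)$, and symmetrically with $W$ and $W'$ exchanged. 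This gives an $\varepsilon$-interleaving of $F_W$ and $F_{W'}$.

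\textbf{Step 3 (conclude).} Apply the algebraic stability theorem of Chazal--de~Silva--Glisse--Oudot \cite{chazal2016structure}: $\varepsilon$-interleaved pointwise finite-dimensional modules have barcodes at bottleneck distance at most $\varepsilon$. Combined with Step 1, this gives $d_B(D_W,D_{W'})\le\varepsilon$. Essential classes, meaning Morse sets that survive to the minimal threshold, are handled by the convention for $\lambda_d$. With that convention, both filtrations end at levels differing by at most $\varepsilon$, so the matching stays within $\varepsilon$. I will verify this directly.

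\textbf{Main obstacle.} I expect Step 1 to be the hardest: proving that the paper's birth/death bookkeeping coincides with the interval decomposition of $F_W$. The remark after Definition~\ref{birth-death} allows births when a merged set changes its Conley index, which would break this coincidence. I will first restrict to the convention in which births are first appearances of SCCs and deaths are merges resolved by the elder rule. This is the convention used in the paragraph defining $(\lambda_d,\lambda_b]$. Under it, the rank function $\operatorname{rank}(F_W(\lambda)\to F_W(\mu))$ counts the components alive over $[\mu,\lambda]$, which by the elder rule is exactly the number of bars containing $[\mu,\lambda]$. If an Conley-index-enriched version is wanted, I would have to treat it separately.
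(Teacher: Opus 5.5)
\textbf{The gap is in Step 1.} Steps~2 and~3 are correct for the module $F_W$ you define. Step~1 fails, however, and the convention you adopt to rescue it changes the theorem.

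The Morse sets at each level partition $X$, and each Morse set at a lower threshold is a union of Morse sets at a higher one. So every structure map $F_W(\lambda)\to F_W(\mu)$ is surjective, and $\operatorname{rank}(F_W(\lambda)\to F_W(\mu))=|\mathcal M_W(\mu)|$. Hence no bar of $F_W$ starts below the top of the filtration: all bars are born at the initial singleton level, the elder rule has nothing to decide, and the barcode records only the counting function $\lambda\mapsto|\mathcal M_W(\lambda)|$.

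The paper's $D_W$ is a different object. In the paragraph defining $(\lambda_d,\lambda_b]$, a Morse set is born at the first threshold where it appears \emph{as a strongly connected component}, and it dies when it merges into a larger one. So at a merge every merging set dies, and their union is a new point of $D_W$ born at the merge threshold. The paper uses no elder rule.

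For example, take two vertices with $W(1,2)=0.8$ and $W(2,1)=0.5$. Then $\{v_1,(v_1,v_2)\}$ is a Morse set exactly for $\lambda\in[0.5,0.8)$ and contributes the point $(0.8,0.5)$. In total $D_W$ has five points, while $F_W$ has three bars, all starting at the top. Your rank identity also fails for $D_W$: with $\lambda=0.9$ and $\mu=0.6$ the rank is $2$, but only one point of $D_W$ covers $[0.6,0.9]$. As written, your argument proves $d_B(\operatorname{bar}F_W,\operatorname{bar}F_{W'})\le\varepsilon$ for a much coarser invariant, not the stated bound.

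\textbf{How to repair it.} You can keep your architecture and change only the structure maps. Let $G_W(\lambda)$ have basis $\mathcal M_W(\lambda)$ over a field $\mathbb{F}$. For $\lambda\ge\mu$, send $S\mapsto S$ if $S\in\mathcal M_W(\mu)$ and $S\mapsto 0$ otherwise.

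The levels at which a fixed $S\subseteq X$ is a Morse set form an interval, by monotone coarsening plus disjointness. So $G_W$ is functorial and splits, over the distinct sets $S$ that ever occur, into interval modules supported on their lifespans. Its barcode is $D_W$ on the nose, up to the top/bottom conventions you already discuss.

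Define the interleaving maps by the same ``same set or zero'' rule. Commutativity then reduces to one squeeze, based on Lemma~\ref{lem:morse_refinement} and disjointness. Suppose $S\in\mathcal M_W(\lambda+\varepsilon)\cap\mathcal M_W(\lambda-\varepsilon)$. Pick $T\in\mathcal M_{W'}(\lambda)$ and $U\in\mathcal M_W(\lambda-\varepsilon)$ with $S\subseteq T\subseteq U$. Disjointness forces $U=S$, and hence $T=S$. The same squeeze, with one index taken in $W$ and one in $W'$, gives naturality. Algebraic stability then yields $d_B(D_W,D_{W'})\le\varepsilon$.

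Equivalently, and closer to the paper's own proof, you can skip modules. Match each Morse set with the identical set in the other filtration whenever its lifespan exceeds $2\varepsilon$ on either side, and send the rest to the diagonal. The squeeze shows the matched endpoints differ by at most $\varepsilon$. The paper's proof matches Morse sets via ``the same set'' in the perturbed filtration, but its claimed one-to-one correspondence of all points holds only after short-lived sets are sent to the diagonal.
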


\begin{proof}
From Lemma~\ref{lem:morse_refinement}, the Morse decompositions associated
with $W$ and $W'$ satisfy
\[
\mathcal M_W(\lambda+\varepsilon)
  \prec
\mathcal M_{W'}(\lambda)
  \prec
\mathcal M_W(\lambda-\varepsilon)
  \qquad \text{for all }\lambda.
\]
This means that each Morse set in $\mathcal M_{W'}(\lambda)$ is contained
between Morse sets of $\mathcal M_W$ taken at thresholds shifted by at
most $\varepsilon$.

Consider now the birth and death thresholds $(\lambda_b,\lambda_d)$ of a
Morse set in the filtration associated with $W$.
By the inclusion above, when we slightly perturb the relation matrix to
$W'$, the same set (or its image under merging) must appear and disappear
within thresholds shifted by at most $\varepsilon$.
That is, there exist $\lambda_b',\lambda_d'$ for the corresponding Morse
set in $W'$ such that
\[
|\lambda_b - \lambda_b'| \le \varepsilon,
\qquad
|\lambda_d - \lambda_d'| \le \varepsilon.
\]
This defines a one-to-one correspondence between points
$(\lambda_b,\lambda_d) \in D_W$ and
$(\lambda_b',\lambda_d') \in D_{W'}$
such that
\[
\|(\lambda_b,\lambda_d) - (\lambda_b',\lambda_d')\|_\infty \le \varepsilon.
\]
By the definition of the bottleneck distance, this implies
$d_B(D_W, D_{W'}) \le \varepsilon$.

The argument follows the same logic as the classical stability theorem
for ordinary persistence diagrams
(see~\cite{cohen2005stability, chazal2016structure}), 
but is adapted here to the setting of multivector field filtrations and
Morse decompositions.
\end{proof}

\medskip
\noindent 
Theorem~\ref{thm:diagram_stability} establishes that a small change $\varepsilon$ in any entry of $W$---for
instance due to noise in the observed transition probabilities---can
shift the birth and death of recurrent structures by at most $\varepsilon$
along the filtration.
Consequently, long-lived Morse sets correspond to genuinely robust
dynamical features of the system rather than artifacts of numerical
thresholding.
This result provides the theoretical foundation for using Morse
persistence diagrams as stable and interpretable descriptors of dynamic
or relational structures derived from data.
\begin{corollary}[Conley--enriched stability]
\label{cor:conley_stability}
Let $W$ and $W'$ be two relation matrices on the same finite graph with
$\|W - W'\|_\infty \le \varepsilon$.
Assume that the Conley indices $CH_k(M(\lambda))$ are computed over a
fixed field.
Then the Conley--enriched Morse persistence diagrams
(including Betti-number labels $\beta_k$) satisfy
\[
d_B(D_W^{(k)}, D_{W'}^{(k)}) \le \varepsilon.
\]
\end{corollary}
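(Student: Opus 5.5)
The plan is to deduce the corollary from Theorem~\ref{thm:diagram_stability} by showing that the matching built there between $D_W$ and $D_{W'}$ can be chosen to respect the Betti-number labels $(k,\beta_k)$. We then split each Conley--enriched diagram by label, setting $D_W^{(k,\beta)}=\{(\lambda_b,\lambda_d): \beta_k=\beta\}$, and apply the bottleneck argument separately to each label class. Since $D_W^{(k)}$ is the disjoint union of these classes, a matching that respects labels and has cost at most $\varepsilon$ on each class gives $d_B(D_W^{(k)},D_{W'}^{(k)})\le\varepsilon$.

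\textbf{Step 1: interleaving of labelled Morse sets.} By Lemma~\ref{lem:morse_refinement} we have
\[
\mathcal M_W(\lambda+\varepsilon)\prec\mathcal M_{W'}(\lambda)\prec\mathcal M_W(\lambda-\varepsilon)
\]
for all $\lambda$, and symmetrically with $W$ and $W'$ exchanged. The plan is to track a Morse set $M$ of $W$ born at $\lambda_b$ to the Morse set $M'$ of $W'$ containing it at $\lambda_b-\varepsilon$. Using the symmetric relation, we then push $M'$ back into a Morse set of $W$ at $\lambda_b-2\varepsilon$.

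\textbf{Step 2: the labels agree.} We must show that $\beta_k(M)=\beta_k(M')$ whenever the matched births are within $\varepsilon$ of each other. Proposition~\ref{prop:homology-lc} reduces this to $k\in\{0,1\}$. The idea is that $\operatorname{cl}(M)$ and $\operatorname{mo}(M)$ are determined by the underlying set of vertices and edges. If $M$ and $M'$ are born as the \emph{same subset} of $X$ (the combinatorial event occurring at a shifted threshold), then $H_k(\operatorname{cl}M,\operatorname{mo}M)$ is literally the same group, computed over the fixed field, so the labels coincide.

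\textbf{Main obstacle.} The difficulty arises when the perturbation makes two merges that were distinct for $W$ occur in a different order for $W'$. Then a Morse set of $W$ may have no set-theoretically identical partner for $W'$, so its Conley index could differ. In that case the plan is to match the point to the diagonal instead. This costs at most $\varepsilon$ only if $\lambda_b-\lambda_d\le 2\varepsilon$.

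That bound should hold because an order swap forces both merge thresholds to lie within $\varepsilon$ of a common value $W(i,j)\pm\varepsilon$. We would argue this from the critical-value structure of Proposition~\ref{prop:finite-diagram} together with the sandwich of Lemma~\ref{lem:filtration_stability}. If that argument fails, we fall back on requiring $\varepsilon$ to be smaller than half the minimal gap between distinct values in $\Lambda^*$. Under that assumption, the sequence of merges, and hence the Morse sets as subsets of $X$, is identical for $W$ and $W'$, and the labels transfer verbatim.
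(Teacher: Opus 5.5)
\textbf{There is a genuine gap.} Your argument rests on one claim: a Morse set with no set-theoretically identical partner in the other filtration has lifetime at most $2\varepsilon$, so it can be sent to the diagonal at cost $\le\varepsilon$. You never prove this. You say it ``should hold'' by an order-swap heuristic that is never turned into an argument, and then retreat to assuming $\varepsilon$ is less than half the minimal gap of $\Lambda^*$.

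That fallback proves a strictly weaker statement than the corollary. Its conclusion, that the Morse sets of $W$ and $W'$ coincide as subsets, is also false when $W$ has ties. Take $n=2$ with $W(1,2)=W(2,1)=c$, so $\Lambda^*=\{c\}$ and the gap condition is vacuous. The perturbation $W'(1,2)=c+\varepsilon$, $W'(2,1)=c-\varepsilon$ makes $\{v_1,(v_1,v_2)\}$ a Morse set on a parameter interval of length $2\varepsilon$, and this set never occurs for $W$.

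Step~1 does not close the gap as written either. You never show that the $W'$-Morse set $M'$ containing $M$ at $\lambda_b-\varepsilon$ equals $M$; it can be strictly larger, with a different Conley index. Nor does the proof of Theorem~\ref{thm:diagram_stability} supply an explicit matching whose labels you could check.

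\textbf{The missing observation.} At a fixed level the Morse sets are pairwise disjoint: they are the SCCs of the $M$-graph, whose nodes partition $X$. So the sandwich of Lemma~\ref{lem:morse_refinement} forces equality, not just containment. In your chain $M\subseteq M'\subseteq M''$, let $M''$ be the $W$-Morse set at $\lambda_b-2\varepsilon$ containing $M'$. If $M$ is still a $W$-Morse set at $\lambda_b-2\varepsilon$ (its lifetime exceeds $2\varepsilon$), then $M''$ meets the block $M$. Hence $M''=M$, and so $M'=M$. Thus every Morse set of lifetime $>2\varepsilon$ for one matrix is literally a Morse set for the other.

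Now apply the two inclusions just outside the endpoints. Above its birth, $M$ is a union of at least two Morse sets; below its death, it lies strictly inside one. This gives $|\lambda_b-\lambda_b'|\le\varepsilon$ and $|\lambda_d-\lambda_d'|\le\varepsilon$ whenever the same subset is a Morse set for both $W$ and $W'$. Monotonicity makes the set of parameters at which a fixed subset is a Morse set an interval, so each subset contributes exactly one point.

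Match identical subsets; their labels agree by your Step~2. Every remaining point then has lifetime $\le 2\varepsilon$, so send it to the diagonal. This proves the corollary with no gap hypothesis.

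For comparison, the paper's proof only argues that $(\operatorname{cl}M,\operatorname{mo}M)$, and hence $\beta_k$, changes solely at critical values displaced by at most $\varepsilon$, and then asserts the labelled correspondence directly. The identical-subset argument above is what actually justifies that correspondence.
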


\begin{proof}
The Conley index $CH_k(M(\lambda))$ of a Morse set depends only on the
topological pair $(\operatorname{cl}(M(\lambda)), \operatorname{mo}(M(\lambda)))$,
that is, on its closure and its mouth.
As shown in Lemma~\ref{lem:filtration_stability}, a perturbation of the
relation matrix by at most $\varepsilon$ can only shift the appearance or
disappearance of multivectors---and therefore the boundaries of Morse
sets—by thresholds within $\varepsilon$.
Since both the closure and the mouth vary continuously with the
filtration, the associated homology groups can change only when a merge
or split occurs, i.e.\ when $\lambda$ crosses a critical value displaced
by at most $\varepsilon$.

Because the Betti numbers $\beta_k = \operatorname{rank} CH_k(M(\lambda))$
are integer invariants, they remain constant between such events and
change only at those shifted thresholds.
Consequently, each labelled point
$(\lambda_b,\lambda_d,k,\beta_k)$ in $D_W^{(k)}$ corresponds to a point
$(\lambda_b',\lambda_d',k,\beta_k')$ in $D_{W'}^{(k)}$ satisfying
\[
\|(\lambda_b,\lambda_d) - (\lambda_b',\lambda_d')\|_\infty \le \varepsilon.
\]
Hence the bottleneck distance between the labelled diagrams is bounded by
$\varepsilon$, which proves the claim.
A similar argument for the continuity of Conley indices in combinatorial
settings can be found in~\cite{kaczynski2004computational,
mrozek2017conley}.
\end{proof}

\medskip
\noindent
Corollary~\ref{cor:conley_stability} extends the stability of Morse
persistence to the topological level: not only the existence and duration
of recurrent regions are robust, but also their internal topological
structure as captured by the Conley index.
In particular, the Betti numbers $\beta_0$ and $\beta_1$, which describe
the number of connected components and independent cycles within a Morse
set, remain stable under small perturbations of the relation matrix.
This result reinforces the interpretation of the
Conley--enriched Morse persistence diagram as a reliable descriptor of
dynamic organization: both the recurrence patterns and their topological
signatures vary smoothly with the data.
Such robustness is crucial in applications to noisy or stochastic
systems, where the relation strengths $W(i,j)$ are often estimated from
empirical observations or simulations.
 
Although the present work is purely structural, the Morse persistence
diagram provides a compact and stable representation of the dynamic
organization encoded by the relation matrix. In particular, the diagram
can be embedded into finite-dimensional feature spaces using functional
coordinates such as the polynomial summaries introduced by
Carlsson~\cite{adcock2013ring}. These \emph{Carlsson coordinates}
map a persistence diagram to numerical invariants obtained by integrating
monomials of birth and death parameters over all points of the diagram.
Because the Morse persistence diagram is stable with respect to
perturbations of the relation matrix, these coordinates inherit the same
robustness. Consequently, one may regard the diagram as a structured
descriptor of the underlying graph dynamics, suitable for statistical
comparison, clustering, or model inference, without requiring direct
access to the original relation matrix. In this sense, the present
framework provides a principled bridge between combinatorial dynamics and
data-driven learning.

 \section{Future work}

The present framework establishes the theoretical foundation for
analyzing recurrent structures in graphs through multivector fields and
Morse persistence diagrams.  Several directions naturally extend from
this work.

A first avenue concerns the practical computation of these diagrams on
large or time-varying graphs.  Efficient algorithms for the multivector
construction, together with scalable implementations of Morse set
detection, would enable the systematic study of complex relational data
in realistic settings such as social or biological networks.

A second direction lies in integrating the persistence information with
machine learning models.  Persistence diagrams derived from graph
multivector fields provide stable, low-dimensional representations of
dynamical and relational structure.  They can therefore serve as
features for learning tasks, allowing neural networks or other
statistical models to incorporate topological information about
recurrence, feedback, or entanglement.  Training on such features may
reveal new ways to detect patterns that are invisible to purely metric
methods.

Finally, the same formalism may be adapted to domains where relational
data evolve over time, in logic-based reasoning, or in quantum
correlation networks.  Future work will focus on implementing and
evaluating these applications, exploring how the topological summaries
obtained from Morse persistence can inform, guide, or even constrain
data-driven models in these diverse contexts.
\section*{Competing interests}
   Research of D.W. is partially supported by the Polish National Science Center under Opus Grant No. 2019/35/B/ST1/00874.\\
    
\newpage
\bibliographystyle{plain}
\bibliography{references}
\end{document}